\DeclareMathOperator{\midd}{mid}
\DeclareMathOperator{\Hilb}{Hilb}
\DeclareMathOperator{\ctot}{ctot}
\newtheorem{lemma}{Lemma}[section]
\newtheorem{theorem}{Theorem}[section]
\newtheorem{corollary}{Corollary} 
\theoremstyle{definition}
\newtheorem{definition}{Definition}[section] 
\newtheorem{remark}{Remark}
\begin{document}
\author{Mikhail Mazin}
\address{Mathematics Department, Kansas State University.
Cardwell Hall, Manhattan, KS 66506}
\email{mmazin@math.ksu.edu}
\title[A bijective proof of Loehr-Warrington's formulas]{A bijective proof of Loehr-Warrington's formulas for the statistics $\ctot_{\frac{q}{p}}$ and $\midd_{\frac{q}{p}}$.}
\date{}

\begin{abstract}
Loehr and Warrington introduced partitional statistics $\ctot_{\frac{q}{p}}(D)$ and $\midd_{\frac{q}{p}}(D)$ and provided formulas for these statistics in terms of the boundary graph of the Young diagram $D$. In this paper we give a bijective proof of Loehr-Warrington's formulas using the following simple combinatorial observation: given a Young diagram $D$ and two numbers $a$ and $l,$ the number of boxes in $D$ with the arm length $a$ and the leg length $l$ is one less than the number of boxes with the same properties in the complement to $D.$ Here the complement is taken inside the positive quadrant or, equivalently, a very large rectangle. 

{\bf Keywords:} Partition, Young diagram, Bijection, Hilbert scheme.

{\bf AMS Subject Classificiation Numbers:} 05A17.
\end{abstract}


\maketitle

\section{Introduction.}

Let $D$ be a Young diagram and $(p,q)$ be a pair of positive coprime integers such that $p+q>|D|.$ Following \cite{LW} we introduce the following statistic:

\begin{definition}
For a box $c\in D,$ let $a(c)$ and $l(c)$ denote the lengths of the arm and the leg of $c$ (see Figure \ref{armleg inside the diagram}). The statistic $h_{\frac{q}{p}}(D)$ is defined by
$$
h_{\frac{q}{p}}(D):=\left| \left\lbrace c\in D: \frac{l(c)}{a(c)+1}< \frac{q}{p}< \frac{l(c)+1}{a(c)} \right\rbrace \right|.
$$
\end{definition}

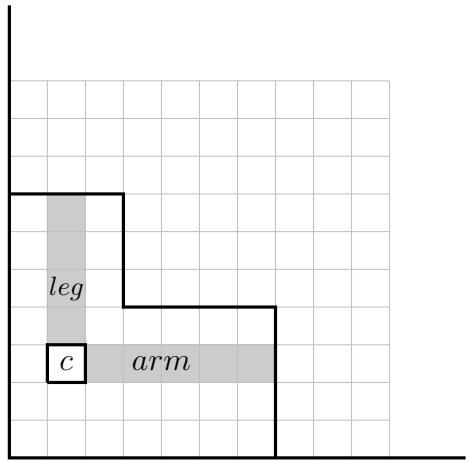
\begin{figure}
\centering
\begin{tikzpicture}
\filldraw [color=gray!50!white, fill=gray!40!white] (1,1)--(1,1.5)--(3.5,1.5)--(3.5,1)--(1,1); 
\filldraw [color=gray!50!white, fill=gray!40!white] (0.5,1.5)--(0.5,3.5)--(1,3.5)--(1,1.5)--(0.5,1.5); 
\draw [step=0.5, color=gray!50!white, very thin] (0,0) grid (5,5);
\draw [very thick] (0,6)--(0,0)--(6,0); 
\draw [very thick] (0,3.5)--(1.5,3.5)--(1.5,2)--(3.5,2)--(3.5,0); 
\draw [very thick] (0.5,1)--(0.5,1.5)--(1,1.5)--(1,1)--(0.5,1); 

\draw (0.75,1.25) node {$c$};

\draw (0.75,2.25) node {\footnotesize $leg$}; 

\draw (2,1.25) node {$arm$}; 

\end{tikzpicture}
\caption{\footnotesize On this example the leg length is $l(c)=4,$ and the arm length is $a(c)=5.$}
\label{armleg inside the diagram}
\end{figure} 

\begin{remark}
The condition $p+q>|D|$ guarantees that $\frac{l(c)}{a(c)+1}\neq \frac{q}{p}$ and $\frac{l(c)+1}{a(c)}\neq\frac{q}{p}$ for all boxes $c\in D.$ In fact, the opposite is also true: for any $n\ge p+q$ there exists a diagram $D$ with area $n$ and a box $c\in D$ such that $\frac{l(c)}{a(c)+1}= \frac{q}{p}.$ There also exists a (possibly different) diagram $D',$ also of area $n,$ and a box $c'\in D'$ such that $\frac{l(c')+1}{a(c')}=\frac{q}{p}.$  
\end{remark}

These statistics play an important role in the theory of Hilbert schemes of points on the complex plane. One can show that the Hilbert scheme of $|D|$ points on the plane can be decomposed into affine cells enumerated by Young diagrams of area $n,$ so that the complex dimension of the cell $C_{D}$ corresponding to the diagram $D$ equals $|D|+h_{\frac{q}{p}}(D).$ (This follows from the Ellingsrud-Str\o mme computation of the character of the torus action on the tangent space to the Hilbert scheme at a monomial ideal \cite{ES}, and the theory of Bia\l ynicki-Birula cell decompositions \cite{BB}.) One gets different cell decompositions for different choices of integers $(p,q),$ but the total number of cells of a given dimension remains the same. One gets the following theorem:

\begin{theorem}[e.g. \cite{LW}]\label{Theorem: equidistributed}
Let $n$ and $h$ be positive integers. Then the number of Young diagrams $D$ of area $n$ and such that $h_{\frac{q}{p}}(D)=h$ is independent of the choice of positive coprime integers $(p,q),$ provided that $p+q>n.$ 
\end{theorem}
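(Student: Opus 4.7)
The plan is to prove this via the geometric argument sketched in the paragraph preceding the statement, since the theorem is fundamentally an equidistribution result that drops out of topology. I would work with the Hilbert scheme $\Hilb^n(\mathbb{C}^2)$, equipped with its natural action of the two-dimensional torus $T=(\mathbb{C}^*)^2$ coming from scaling the two coordinates. The fixed points of this action are the monomial ideals, which are in bijection with Young diagrams $D$ of area $n$.

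Next I would feed into this a one-parameter subgroup $\lambda_{p,q}:\mathbb{C}^*\to T$ given by $t\mapsto(t^p,t^q)$. The Białynicki-Birula theorem then produces an affine cell decomposition of $\Hilb^n(\mathbb{C}^2)$, provided $\lambda_{p,q}$ is \emph{generic}, meaning that no weight of $T$ appearing in a tangent space at a fixed point pairs to zero with $\lambda_{p,q}$. The Ellingsrud-Strømme character computation \cite{ES} identifies these weights explicitly in terms of arms and legs of boxes of $D$: they are proportional to $p(a(c)+1)-ql(c)$ and $pa(c)-q(l(c)+1)$ for $c\in D$. The hypothesis $p+q>n$, together with the remark following the definition of $h_{\frac{q}{p}}$, says precisely that none of these integers vanish, so $\lambda_{p,q}$ is generic. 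The complex dimension of the cell $C_D$ is then the number of weights that are \emph{positive} with respect to $\lambda_{p,q}$, and Ellingsrud-Strømme's formula yields exactly $|D|+h_{\frac{q}{p}}(D)$.

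To conclude, I would invoke the standard consequence of Białynicki-Birula: since all cells are complex affine spaces (so have only even real dimension), the resulting cell decomposition computes the Betti numbers of $\Hilb^n(\mathbb{C}^2)$, and the number of cells of complex dimension $k$ equals $b_{2k}\bigl(\Hilb^n(\mathbb{C}^2)\bigr)$. The Betti numbers are topological invariants and in particular do not depend on the auxiliary choice of $\lambda_{p,q}$. Setting $k=n+h$ and using that $|D|=n$ on every diagram in the count, the number of diagrams with $h_{\frac{q}{p}}(D)=h$ equals $b_{2(n+h)}\bigl(\Hilb^n(\mathbb{C}^2)\bigr)$, which is manifestly independent of $(p,q)$.

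The main obstacle in this outline is not conceptual but bookkeeping: one must carefully match the sign conventions so that the positive-weight count at the fixed point $I_D$ comes out to $|D|+h_{\frac{q}{p}}(D)$ rather than, say, the complementary statistic. This amounts to checking that for each box $c$, the two tangent weights associated with $c$ contribute a total of $1$ or $2$ positive weights exactly when the conditions $\frac{l(c)}{a(c)+1}<\frac{q}{p}<\frac{l(c)+1}{a(c)}$ are satisfied — a direct case analysis on the signs of $p(a(c)+1)-ql(c)$ and $pa(c)-q(l(c)+1)$.
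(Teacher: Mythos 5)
Your outline is the geometric argument that the paper itself sketches (in the paragraph preceding the theorem and again in Section 5): torus fixed points are monomial ideals, the Ellingsrud--Str\o mme character gives the tangent weights, $p+q>n$ forces genericity of the subtorus, and the Bia\l ynicki-Birula cell of $I_D$ has dimension $|D|+h_{\frac{q}{p}}(D)$. All of that matches the paper and is fine (the sign bookkeeping you defer is exactly the computation carried out in Section 5).

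There is, however, one step that fails as stated: the claim that ``the number of cells of complex dimension $k$ equals $b_{2k}\bigl(\Hilb^n(\mathbb{C}^2)\bigr)$.'' The Hilbert scheme is \emph{not} compact, and for a non-complete smooth variety an algebraic cell decomposition computes Borel--Moore homology, not ordinary homology: the number of cells of complex dimension $k$ is the rank of $H^{BM}_{2k}$, which by Poincar\'e duality on the smooth $2n$-fold is $b_{4n-2k}$, not $b_{2k}$. Your own dimension formula makes the discrepancy visible: the cells have dimensions $n+h$ with $h\ge 1$, i.e.\ at least $n+1$, whereas the Poincar\'e polynomial of $\Hilb^n(\mathbb{C}^2)$ is supported in degrees $\le 2(n-1)$, so $b_{2k}=0$ for every $k$ that actually occurs. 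The conclusion survives once you replace $b_{2k}$ by the rank of $H^{BM}_{2k}$ (equivalently $b_{2(n-h)}$), since Borel--Moore homology is just as much a topological invariant independent of $(p,q)$ --- this is precisely why the paper phrases the geometric argument in terms of ``invariance of the Borel--Moore homology groups.'' A second, smaller omission for the same reason: Bia\l ynicki-Birula's theorem is stated for complete varieties, so to get a decomposition of all of $\Hilb^n(\mathbb{C}^2)$ you must check that every point has a limit under the one-parameter subgroup (the paper notes that orbits stay bounded as $t\to 0$ because $p,q>0$); without this the cells need not cover the whole space.
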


For more details on the cell decompositions of Hilbert schemes, see Section \ref{Section: geometric remarks}. Although geometrically Theorem \ref{Theorem: equidistributed} follows immediately from the invariance of the Borel-Moore homology groups of the Hilbert schemes of the plane, combinatorially it is quite puzzling. Loehr and Warrington provided a purely combinatorial proof in \cite{LW}. The strategy of their proof was as follows. Note that for a fixed diagram $D,$\ $h_x(D)$ is a locally constant integer-valued function in $x\in\mathbb R_{>0}\backslash \{\frac{q}{p}: p+q\le |D|\}.$ There are two natural ways to extend this function to all positive reals:

\begin{definition}[\cite{LW}, see Introduction]
The statistic $h_x^+(D)$ (respectively, $h_x^-(D)$) is the continuous on the right (respectively, continuous on the left) extension of $h_x(D).$ In other words, these statistics are defined by formulas:

$$
h_{x}^+(D):=\left| \left\lbrace c\in D:\frac{a(c)}{l(c)+1}\le x<\frac{a(c)+1}{l(c)}\right\rbrace \right|,
$$
and
$$
h_{x}^-(D):=\left| \left\lbrace c\in D:\frac{a(c)}{l(c)+1}<x\le\frac{a(c)+1}{l(c)}\right\rbrace \right|.
$$
\end{definition} 

For each positive rational number $x,$ Loehr and Warrington constructed an explicit bijection from the set of Young diagrams of a given area to itself that interchanges the statistics $h_x^+$ and $h_x^-.$ This provides a combinatorial proof of the fact that all statistics $h_x$ for all $x\in\mathbb R_+$ are equally distributed on diagrams of a given area. Indeed, for a fixed area, there are only finitely many values of $x$ where $h_x$ might jump. By applying Loehr-Warrington's bijections at each such value on the interval $[x,y],$ one gets a bijection interchanging statistics $h_x$ and $h_y.$ Let us recall some definitions from \cite{LW}.

\begin{definition}[\cite{LW}, see Chapter $3$]
Let $D$ be a Young diagram. Let
$$
c^+_{\frac{q}{p}}(D):=\left| \left\lbrace c\in D:\frac{l(c)}{a(c)+1}=\frac{q}{p}\right\rbrace \right|,
$$
$$
c^-_{\frac{q}{p}}(D):=\left| \left\lbrace c\in D:\frac{l(c)+1}{a(c)}=\frac{q}{p}\right\rbrace \right|,
$$
$$
\ctot_{\frac{q}{p}}(D):=c^+_{\frac{q}{p}}(D)+c^-_{\frac{q}{p}}(D),
$$
and
$$
\midd_{\frac{q}{p}}(D):=\left| \left\lbrace c\in D:\frac{l(c)}{a(c)+1}<\frac{q}{p}<\frac{l(c)+1}{a(c)}\right\rbrace \right|.
$$
\end{definition}

\begin{remark}
Note that $h^+_{\frac{q}{p}}(D)=\midd_{\frac{q}{p}}(D)+c^+_{\frac{q}{p}}(D)$ and $h^-_{\frac{q}{p}}(D)=\midd_{\frac{q}{p}}(D)+c^-_{\frac{q}{p}}(D).$
\end{remark}

An important step in the Loehr-Warrington's constructions are the formulas expressing symmetric statistics $\ctot_{\frac{q}{p}}(D)$ and $\midd_{\frac{q}{p}}(D)$ in terms of the boundary graph of the diagram $M(D).$ Let us recall the construction of $M(D).$

Let $K$ be a big enough integer so that $D$ fits into the $Kp\times Kq$ rectangle $R_{Kp,Kq}$ under the diagonal. Let $P=Kp$ and $Q=Kq.$ Consider the boundary lattice path $B(D)$ going from the southeast corner of the rectangle $R_{P,Q}$ to the northwest corner of the rectangle $R_{P,Q}$ along the boundary of the diagram $D.$ We think of $B(D)$ as of an oriented graph with edges labeled by $N$ (northward) and $W$ (westward). Let us label the vertices of $B(D)$ by integers as follows: the starting vertex is labeled by $0$ and then each westward edge adds $q,$ while each northward edge subtracts $p.$ Finally, we identify the vertices labeled by the same integer. The resulting graph is denoted $M(D).$ Note that the graph $M(D)$ comes equipped with an Eulerian tour $E(D),$ following the path $B(D).$ We illustrate this construction on Figure \ref{figure:B(D) and M(D)}. 

\begin{figure}
\centering
\begin{tikzpicture}[scale=1.2]
\filldraw [thick, fill=gray!30!white] (0,0)--(0,3)--(1,3)--(1,2)--(3,2)--(3,1)--(4,1)--(4,0)--(0,0);

\draw [step=0.5, color=gray!50!white, very thin] (0,0) grid (6,4);
\draw [thick] (0,4)--(0,0)--(6,0); 
\draw [thick] (0,4)--(6,0);
\draw [thick] (0,4)--(6,4)--(6,0);

\draw (5.9,-0.1) node {\scriptsize $0$}; 
\draw [thick, ->] (6,0)--(5.5,0);
\draw (5.4,-0.1) node {\scriptsize $2$}; 
\draw [thick, ->] (5.5,0)--(5,0);
\draw (4.85,-0.1) node {\scriptsize $4$}; 
\draw [thick, ->] (5,0)--(4.5,0);
\draw (4.35,-0.1) node {\scriptsize $6$}; 
\draw [thick, ->] (4.5,0)--(4,0);
\draw (3.85,-0.1) node {\scriptsize $8$}; 
\draw [thick, ->] (4,1)--(3.5,1);
\draw (3.35,0.9) node {\scriptsize $4$}; 
\draw [thick, ->] (3.5,1)--(3,1);
\draw (2.85,0.9) node {\scriptsize $6$}; 
\draw [thick, ->] (3,2)--(2.5,2);
\draw (2.4,1.9) node {\scriptsize $2$}; 
\draw [thick, ->] (2.5,2)--(2,2);
\draw (1.85,1.9) node {\scriptsize $4$}; 
\draw [thick, ->] (2,2)--(1.5,2);
\draw (1.35,1.9) node {\scriptsize $6$}; 
\draw [thick, ->] (1.5,2)--(1,2);
\draw (0.85,1.9) node {\scriptsize $8$}; 
\draw [thick, ->] (1,3)--(0.5,3);
\draw (0.35,2.9) node {\scriptsize $4$}; 
\draw [thick, ->] (0.5,3)--(0,3);
\draw (-0.15,2.9) node {\scriptsize $6$}; 

\draw [thick, ->] (4,0)--(4,0.5);
\draw (3.85,0.4) node {\scriptsize $5$}; 
\draw [thick, ->] (4,0.5)--(4,1);
\draw (3.9,0.9) node {\scriptsize $2$}; 
\draw [thick, ->] (0,3)--(0,3.5);
\draw (-0.15,3.4) node {\scriptsize $3$}; 
\draw [thick, ->] (0,3.5)--(0,4);
\draw (-0.1,3.9) node {\scriptsize $0$}; 
\draw [thick, ->] (1,2)--(1,2.5);
\draw (0.85,2.4) node {\scriptsize $5$}; 
\draw [thick, ->] (1,2.5)--(1,3);
\draw (0.9,2.9) node {\scriptsize $2$}; 
\draw [thick, ->] (3,1)--(3,1.5);
\draw (2.85,1.4) node {\scriptsize $3$}; 
\draw [thick, ->] (3,1.5)--(3,2);
\draw (2.85,1.9) node {\scriptsize $0$}; 

\draw (-1,-1.5) node {$0$};
\draw (-1,-1.5) circle [radius=0.2]; 
\draw (1,-1.5) node {$2$}; 
\draw (1,-1.5) circle [radius=0.2]; 
\draw (2,-1.5) node {$3$}; 
\draw (2,-1.5) circle [radius=0.2]; 
\draw (3,-1.5) node {$4$}; 
\draw (3,-1.5) circle [radius=0.2]; 
\draw (4,-1.5) node {$5$}; 
\draw (4,-1.5) circle [radius=0.2]; 
\draw (5,-1.5) node {$6$}; 
\draw (5,-1.5) circle [radius=0.2]; 
\draw (7,-1.5) node {$8$}; 
\draw (7,-1.5) circle [radius=0.2]; 

\draw [arrows={-angle 60}] (-1,-1.3) .. controls (-0.6,-1) and (0.6,-1) .. (1,-1.3);
\draw [arrows={-angle 60}] (-1,-1.3) .. controls (-0.6,-0.7) and (0.6,-0.7) .. (1,-1.3);

\draw [arrows={-angle 60}] (1,-1.3) .. controls (1.4,-1.1) and (2.6,-1.1) .. (3,-1.3);
\draw [arrows={-angle 60}] (1,-1.3) .. controls (1.4,-0.9) and (2.6,-0.9) .. (3,-1.3);
\draw [arrows={-angle 60}] (1,-1.3) .. controls (1.4,-0.7) and (2.6,-0.7) .. (3,-1.3);
\draw [arrows={-angle 60}] (1,-1.3) .. controls (1.4,-0.5) and (2.6,-0.5) .. (3,-1.3);

\draw [arrows={-angle 60}] (3,-1.3) .. controls (3.4,-1.1) and (4.6,-1.1) .. (5,-1.3);
\draw [arrows={-angle 60}] (3,-1.3) .. controls (3.4,-0.9) and (4.6,-0.9) .. (5,-1.3);
\draw [arrows={-angle 60}] (3,-1.3) .. controls (3.4,-0.7) and (4.6,-0.7) .. (5,-1.3);
\draw [arrows={-angle 60}] (3,-1.3) .. controls (3.4,-0.5) and (4.6,-0.5) .. (5,-1.3);

\draw [arrows={-angle 60}] (5,-1.3) .. controls (5.4,-1) and (6.6,-1) .. (7,-1.3);
\draw [arrows={-angle 60}] (5,-1.3) .. controls (5.4,-0.7) and (6.6,-0.7) .. (7,-1.3);

\draw [arrows={-angle 60}] (7,-1.7) .. controls (6.3,-2) and (4.8,-2) .. (4,-1.7);
\draw [arrows={-angle 60}] (7,-1.7) .. controls (6.3,-2.2) and (4.8,-2.2) .. (4,-1.7);

\draw [arrows={-angle 60}] (4,-1.7) .. controls (3.3,-2) and (1.8,-2) .. (1,-1.7);
\draw [arrows={-angle 60}] (4,-1.7) .. controls (3.3,-2.2) and (1.8,-2.2) .. (1,-1.7);

\draw [arrows={-angle 60}] (5,-1.7) .. controls (4.3,-2) and (2.8,-2) .. (2,-1.7);
\draw [arrows={-angle 60}] (5,-1.7) .. controls (4.3,-2.2) and (2.8,-2.2) .. (2,-1.7);

\draw [arrows={-angle 60}] (2,-1.7) .. controls (1.3,-2) and (-0.2,-2) .. (-1,-1.7);
\draw [arrows={-angle 60}] (2,-1.7) .. controls (1.3,-2.2) and (-0.2,-2.2) .. (-1,-1.7);

\end{tikzpicture}
\caption{\footnotesize Example of a boundary path $B(D)$ and the corresponding graph $M(D).$ Here $p=3,$\ $q=2,$ and $K=4.$}
\label{figure:B(D) and M(D)}
\end{figure}
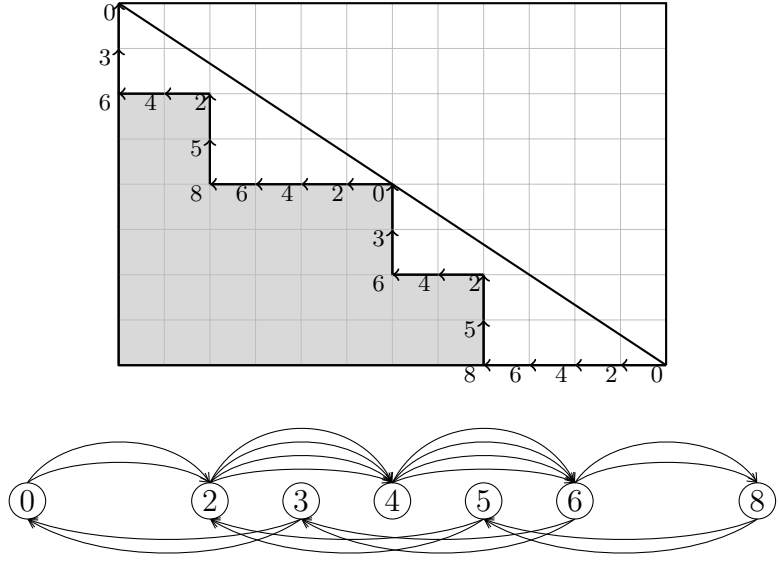

Let $V_M$ be the set of vertices of $M=M(D).$ We identify the vertices of $M$ with the corresponding integers, so that $V_M\subset\mathbb Z.$ For a vertex $v\in V_M$ let $W_{in}(v)$ be the set of westward edges entering $v.$ Respectively, let $N_{in}(v)$ be the set of northward edges entering $v.$ Loehr and Warrington proved the following formulas: 

\begin{theorem}[\cite{LW}]\label{Theorem: LW formulas}
The following formulas for $\ctot_{\frac{q}{p}}(D)$ and $\midd_{\frac{q}{p}}(D)$ in terms of the graph $M(D)$ hold:
\begin{equation}\label{ctot formula}
\ctot_{\frac{q}{p}}(D)=\sum\limits_{v\in V_M} |W_{in}(v)||N_{in}(v)| - K+|N_{in}(0)|
\end{equation}
and
\begin{equation}\label{mid formula}
\midd_{\frac{q}{p}}(D)=|R^+_{P,Q}|-\sum\limits_{v,w\in V_M, v\le w} |W_{in}(v)||N_{in}(w)|,
\end{equation}
where $R^+_{P,Q}\subset R_{P,Q}$ is the set of all boxes below diagonal in $R_{P,Q}.$ 
\end{theorem}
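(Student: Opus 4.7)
The proof rests on a \emph{complement identity}: for every Young diagram $D$ and every $(a,l)\in\mathbb{Z}_{\ge 0}^2$, the number of cells of $D$ with arm $a$ and leg $l$ equals $|D^c(a,l)|-1$, where $D^c(a,l)$ denotes cells $(i,j)$ in the positive-quadrant complement of $D$ satisfying $i-D_j=a$ and $j-D'_i=l$. I will first prove this identity (by induction on $|D|$, tracking how $|D(a,l)|$ and $|D^c(a,l)|$ change when an outer corner is added to $D$) and then apply it at each relevant $(a,l)$ in both formulas of the theorem.

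Next, I set up a bijection between cells $(i,j)$ of $R_{P,Q}$ and pairs (W-edge, N-edge) of $B(D)$: each W-edge is uniquely determined by its ending column $i$, each N-edge by its starting row $j$. A direct computation gives entering labels $v=q(P-i)-pD'_i$ and $w=q(P-D_j)-p(j+1)$, so $v-w=q(D_j-i)-p(D'_i-j-1)$. For $(i,j)\in D$ with arm $a$ and leg $l$ this equals $q(a+1)-pl$; for $(i,j)\in D^c$ with co-arm $a^c=i-D_j$ and co-leg $l^c=j-D'_i$ it equals $p(l^c+1)-qa^c$. In particular, $v=w$ holds exactly for $c^+$-cells of $D$ together with cells of $D^c$ satisfying $qa^c=p(l^c+1)$, and $0<v-w<p+q$ selects exactly $\midd$-cells of $D$ together with cells of $D^c$ satisfying $l^c/(a^c+1)<q/p<(l^c+1)/a^c$.

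For the $\ctot$ formula, the sum $\sum_v |W_{in}(v)||N_{in}(v)|$ counts pairs with $v=w$; using $\gcd(p,q)=1$, the $D^c$-cells with this property correspond to $(a^c,l^c)=(pk,qk-1)$ for some $k\ge 1$. Applying the complement identity at each such $(pk,qk-1)$ gives, in the full positive quadrant, $|D^c(pk,qk-1)|=|D(pk,qk-1)|+1$. Restricting to $R_{P,Q}$, a geometric argument (comparing to the diagonal bound $D_j \le P-p(j+1)/q$) shows that the only cell of $D^c(pk,qk-1)$ that can lie outside $R_{P,Q}$ is the cell at position $(P,qk-1)$, and this cell actually belongs to $D^c(pk,qk-1)$ precisely when $D_{qk-1}=P-pk$---the exact condition for $B(D)$ to have an N-edge entering the vertex labeled $0$ at the $k$-th diagonal lattice point $(P-pk,qk)$. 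Summing over $k\in\{1,\ldots,K\}$ and combining with $c^+(D)$ yields $c^+(D)+c^-(D)+K-|N_{in}(0)|$, which is formula (\ref{ctot formula}).

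The $\midd$ formula follows by the same method applied to pairs with $v\le w$: these split into $D$-cells with $l/(a+1)\ge q/p$ and $D^c$-cells with $(l^c+1)/a^c\le q/p$. I will compute $|R^+_{P,Q}|$ as the number of cells of $R_{P,Q}$ with NE-corner label $q(P-i-1)-p(j+1)\ge 0$ and invoke the complement identity at each relevant $(a,l)$, tracking the boundary contributions exactly as in the $\ctot$ case; this yields (\ref{mid formula}). The main obstacle is the geometric identification of the possibly-excluded $D^c$-cell and verifying its connection to the N-edges at the diagonal points; once this is in place, both formulas follow from the same principle.
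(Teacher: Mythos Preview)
Your proposal follows essentially the same route as the paper. Your ``complement identity'' is exactly the paper's Theorem~1.3; your edge-label computation is the paper's Lemma~4.1; and your boundary analysis---identifying which cells of $D^c(pk,qk-1)$ escape the rectangle $R_{P,Q}$ and matching them to $N$-edges at the diagonal lattice points---is precisely the content of Theorems~3.1 and~3.2 (and their corollaries) specialized to the slopes in question. The only substantive difference is that the paper proves the complement identity bijectively, via equivalence classes of translation-arrows between $\overline{D}$ and $\widehat{D}$, rather than by your proposed induction on $|D|$; the arrow argument is what makes the restriction to the rectangle transparent (one simply tracks when the ``escaping'' class has its canonical representative inside $R_{P,Q}$), whereas with induction you must redo this geometric step by hand. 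Your claim that the only cell of $D^c(pk,qk-1)$ possibly outside $R_{P,Q}$ sits at $(P,qk-1)$ is correct, but note that it relies on the bound $D_{qk-1}\le P-pk$, which in turn uses $D\subset R^+_{P,Q}$; you should make this explicit. Otherwise the plan is sound and coincides with the paper's argument.
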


The objective of this paper is to give a simple combinatorial proof of these formulas, based on the following observation. Let us think of $D$ as of a subset in $(\mathbb Z_{\ge 0})^2,$ with the southwest corner box being $(0,0).$ Let $\overline{D}:=(\mathbb Z_{\ge 0})^2\backslash D$ be the complement to $D$ in $(\mathbb Z_{\ge 0})^2$. The arm and the leg lengths for boxes in $\overline{D}$ are defined in same way as for the boxes inside the diagram $D$ (see Figure \ref{armleg outside the diagram}).

\begin{theorem}\label{Theorem: hooks}
Let $D$ be a Young diagram. Let $a$ and $l$ be non-negative integers. Then the number of boxes $c\in D$ inside the diagram such that $a(c)=a$ and $l(c)=c$ is one less than the number of boxes in $\overline{D}$ with the same property. 
\end{theorem}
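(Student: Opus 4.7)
My plan is to translate the theorem into the equivalent generating function identity
\begin{equation*}
F_{\overline D}(s,t) - F_D(s,t) = \frac{1}{(1-s)(1-t)},
\end{equation*}
where $F_D(s,t) := \sum_{c \in D} s^{a(c)} t^{l(c)}$ and $F_{\overline D}$ is defined analogously; extracting the coefficient of $s^a t^l$ on both sides recovers the claim.

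To verify the identity I would fix $l \ge 0$ and compute the coefficient of $t^l$ in both series column-by-column, using the column-height sequence $\mu = (\mu_0, \mu_1, \ldots)$ and row-length sequence $\lambda = (\lambda_0, \lambda_1, \ldots)$ of $D$. In column $x$, the unique $D$-box with leg $l$ (when it exists, i.e.\ when $\mu_x \ge l + 1$) sits at $(x, \mu_x - l - 1)$ with arm $\lambda_{\mu_x - l - 1} - x - 1$, while the unique $\overline D$-box with leg $l$ sits at $(x, \mu_x + l)$ with arm $x - \lambda_{\mu_x + l}$ and always exists. Grouping columns by the value of $\mu_x$ via the conjugate identity $\{x : \mu_x = k\} = [\lambda_k, \lambda_{k-1} - 1]$ for $k \ge 1$ (and $[\lambda_0, \infty)$ for $k = 0$) turns each group into a geometric sum in $s$, yielding
\begin{align*}
[t^l]F_D &= \frac{1}{1-s}\sum_{Y \ge 0}\bigl(s^{\lambda_Y - \lambda_{Y+l}} - s^{\lambda_Y - \lambda_{Y+l+1}}\bigr),\\
[t^l]F_{\overline D} &= \frac{s^{\lambda_0 - \lambda_l}}{1-s} + \frac{1}{1-s}\sum_{k \ge 1}\bigl(s^{\lambda_k - \lambda_{k+l}} - s^{\lambda_{k-1} - \lambda_{k+l}}\bigr),
\end{align*}
where the isolated first term for $F_{\overline D}$ corresponds to the infinite column range $\{x \ge \lambda_0\}$ with $\mu_x = 0$.

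Subtracting, I would pair the $k$-th summand of $[t^l]F_{\overline D}$ with the $(k-1)$-st summand of $[t^l]F_D$: the two $s^{\lambda_{k-1} - \lambda_{k+l}}$ contributions cancel, and the remainder telescopes as $\sum_{k \ge 1}\bigl(s^{\lambda_k - \lambda_{k+l}} - s^{\lambda_{k-1} - \lambda_{k-1+l}}\bigr) = 1 - s^{\lambda_0 - \lambda_l}$ because $\lambda_k = \lambda_{k+l} = 0$ for $k$ sufficiently large. Combined with the boundary term $s^{\lambda_0 - \lambda_l}$, this gives $[t^l](F_{\overline D} - F_D) = \frac{1}{1-s}$ for every $l$, which matches $[t^l]\frac{1}{(1-s)(1-t)}$ and finishes the proof.

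The main obstacle will be the bookkeeping around that infinite-column boundary term: the sums $\sum_k s^{\lambda_k - \lambda_{k+l}}$ and $\sum_k s^{\lambda_{k-1} - \lambda_{k+l}}$ are individually formally divergent (for example when $l = 0$ every summand equals $1$), so the telescoping must be executed in the paired form rather than after splitting them into separate series.
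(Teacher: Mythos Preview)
Your generating-function argument is correct: the column-by-column computation, the grouping by $\mu_x=k$, and the paired telescope all check out, and the care you take not to split the eventually-constant sums into two divergent pieces is exactly the right precaution. One minor remark: both displayed sums for $[t^l]F_D$ and $[t^l]F_{\overline D}$ already have only finitely many nonzero terms (since $\lambda_k=0$ for $k$ large), so the danger you flag only arises if one tries to separate the two halves of each summand---which you do not do.

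The paper, however, proves this theorem by an entirely different and genuinely bijective method. It considers arrows $(r,s)\to(k,m)$ with tail in $\overline D$ and head in $\widehat D=\mathbb Z^2\setminus\overline D$, declares two arrows equivalent if they differ by a lattice translation through intermediate arrows of the same kind, and shows that pushing a northwest-pointing arrow as far north and west as possible gives a bijection between \emph{non-escaping} classes and boxes of $D$, while pushing south and east gives a bijection between \emph{all} classes and boxes of $\overline D$; since there is exactly one escaping class per displacement vector $(-a-1,l)$, the difference is $1$. Your approach is more self-contained and purely algebraic, and it yields the global identity $F_{\overline D}-F_D=\frac{1}{(1-s)(1-t)}$ in one stroke. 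The paper's arrow picture, on the other hand, is what drives the rest of the article: the same sliding argument, restricted to arrows of slope at least $q/p$ inside the rectangle $R_{P,Q}$, is precisely how Theorems~\ref{theorem inside rectangle} and~\ref{theorem inside rectangle southeast} are obtained, and those are the inputs to the Loehr--Warrington formulas. So while your proof of Theorem~\ref{Theorem: hooks} is clean, it would not directly replace the paper's proof without reworking Section~\ref{Section: inside a rectangle}.
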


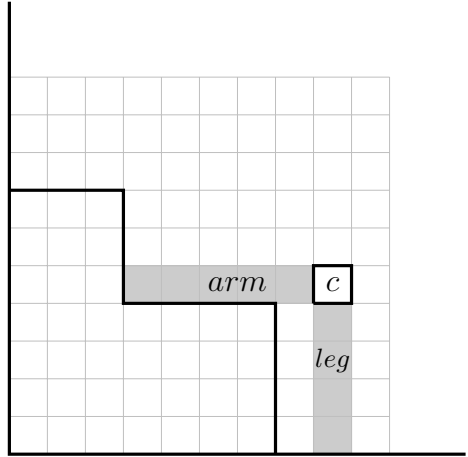
\begin{figure}
\centering
\begin{tikzpicture}
\filldraw [color=gray!50!white, fill=gray!40!white] (1.5,2)--(1.5,2.5)--(4,2.5)--(4,2)--(1.5,2); 
\filldraw [color=gray!50!white, fill=gray!40!white] (4,0)--(4,2)--(4.5,2)--(4.5,0)--(4,0); 
\draw [step=0.5, color=gray!50!white, very thin] (0,0) grid (5,5);
\draw [very thick] (0,6)--(0,0)--(6,0); 
\draw [very thick] (0,3.5)--(1.5,3.5)--(1.5,2)--(3.5,2)--(3.5,0); 
\draw [very thick] (4,2)--(4,2.5)--(4.5,2.5)--(4.5,2)--(4,2); 

\draw (4.25,2.25) node {$c$};

\draw (4.25,1.25) node {\footnotesize $leg$}; 
\draw (3,2.25) node {$arm$};

\end{tikzpicture}
\caption{\footnotesize On this example the leg length is $l(c)=4,$ and the arm length is $a(c)=5.$}
\label{armleg outside the diagram}
\end{figure}

The rest of the paper is organized as follows. Theorem \ref{Theorem: hooks} is proved in Section \ref{Section: inside the positive quadrant}. In Section \ref{Section: inside a rectangle} we adjust the results of Section \ref{Section: inside the positive quadrant} to the case when the diagram is inscribed in a right triangle. In Section \ref{Section: LW identities} we apply the results of Section \ref{Section: inside a rectangle} to give a short proof of Loehr-Warrington's formulas. Finally, in Section \ref{Section: geometric remarks} we discuss a geometric interpretation of the constructions described in this paper, relating these constructions to the geometry of Hilbert schemes.  

\section{Proof of Theorem \ref{Theorem: hooks}.}\label{Section: inside the positive quadrant}

Let $D$ be a Young diagram. As before, let $\overline{D}:=(\mathbb Z_{\ge 0})^2\backslash D$ be the complement to $D$ in the non-negative quadrant, and let $\widehat{D}:=\mathbb Z^2\backslash \overline{D}$ be the complement to $\overline{D}$ in the whole plane $\mathbb Z^2$ (i.e. $\widehat{D}=D\sqcup\{(c,d)\in \mathbb Z^2| c<0\  \mbox{or}\  d<0\}$).  

Consider the set of arrows $A:=\{(a,b)\rightarrow (c,d)|(a,b)\in\overline{D}\ \mbox{and}\ (c,d)\in \widehat{D}\}$ pointing from a box in $\overline{D}$ to a box in $\widehat{D}.$ If two arrows in $A$ differ by a translation by $1$ in vertical or horizontal directions, we say that they are equivalent. This generates an equivalence relation on $A.$ We say that an arrow is {\it escaping} if it is equivalent to an arrow pointing outside the positive quadrant.

Note that there are no north, northeast, or east pointing arrows, and all southwest pointing arrows are escaping. From now on we will concentrate on the set of northwest and west pointing arrows $A_{nw}:=\{[(a,b)\rightarrow (c,d)]\in A| c < a\ \mbox{and}\ d\ge b\}.$ The southeast pointing arrows can be treated similarly.

The following observation can be found in \cite{Hn98}:

\begin{theorem}[\cite{Hn98}]\label{Theorem about boxes inside}
The equivalence classes of non-escaping northwest pointing arrows are in natural one-to-one correspondence with the boxes of the diagram $D.$
\end{theorem}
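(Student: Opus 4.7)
The plan is to produce an explicit bijection $c\mapsto[\alpha(c)]$ between boxes of $D$ and equivalence classes of non-escaping northwest arrows. For a box $c=(x,y)\in D$ with arm $a=a(c)$ and leg $l=l(c)$ I set
\[
\alpha(c):= \bigl((x+a+1,\,y)\longrightarrow(x,\,y+l)\bigr).
\]
The source is the cell immediately east of the rightmost cell of row $y$ of $D$, hence in $\overline{D}$; the target is the topmost cell of column $x$ of $D$, hence in $D\subset\widehat{D}$; and the displacement $(-(a+1),l)$ is northwest, so $\alpha(c)\in A$. I will show that $c\mapsto[\alpha(c)]$ is a bijection onto non-escaping classes, refined by displacement: each class of fixed displacement $(-k,l)$ (with $k\ge 1$, $l\ge 0$) will match a unique box $c$ with $a(c)=k-1$ and $l(c)=l$.

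The technical core is a row-by-row description of $A$ at a fixed displacement $(-k,l)$. Let $\lambda_0\ge\lambda_1\ge\cdots$ be the row lengths of $D$. A direct calculation shows the sources of arrows in $A$ of this displacement, restricted to row $b\ge 0$, form the (possibly empty) integer interval
\[
I_b=[\,\lambda_b,\;\lambda_{b+l}+k-1\,].
\]
An escape source is one with $a<k$ (present iff $\lambda_b<k$); all other sources have target in $D$. Since $I_b$ is an interval, it is internally connected by horizontal unit translations, and a vertical unit translation bridges $I_b$ to $I_{b+1}$ iff the two intervals share an integer, which (using that $\lambda$ is non-increasing) reduces to the single inequality $\lambda_b\le\lambda_{b+l+1}+k-1$. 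Hence an equivalence class of arrows of displacement $(-k,l)$ is exactly a maximal range of rows $b_0\le b\le b_1$ with every $I_b$ non-empty and every consecutive pair overlapping. Monotonicity of $\lambda$ then gives that the class is non-escaping iff its top row satisfies $\lambda_{b_1}\ge k$, which forces $\lambda_b\ge k$ for every $b$ in the range and kills all escape sources.

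Finally, I match top rows of non-escaping classes with the required boxes. Unpacking the three defining conditions on $b_1$ (non-escape; $I_{b_1}$ non-empty; $I_{b_1+1}$ failing to overlap $I_{b_1}$, either by being empty or by lying entirely to the left of $I_{b_1}$) yields
\[
\lambda_{b_1}\ge k,\qquad \lambda_{b_1+l+1}\le\lambda_{b_1}-k<\lambda_{b_1+l}.
\]
Writing $k=a+1$ and $y=b_1$, these are precisely the conditions for $c=(\lambda_y-k,y)$ to lie in $D$ with arm $k-1$ and leg $l$. The corresponding arrow $\alpha(c)$ has source $(\lambda_y,y)\in I_y$, placing $[\alpha(c)]$ in the class with top row $y$. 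Summing over all displacements yields the claimed bijection.

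The main obstacle is the second paragraph: verifying that the sources in row $b$ form a single interval and that inter-row connectivity collapses to interval overlap, both of which rest on monotonicity of $\lambda$. Once this combinatorial reduction is in place, the non-escape condition, the top-row inequalities, and the arm/leg inequalities characterizing a box of $D$ all become the same three inequalities on $\lambda$, and the bijection drops out.
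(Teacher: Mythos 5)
Your proof is correct, and it lands on the same canonical arrow $\alpha(c)=\bigl((x+a+1,y)\to(x,y+l)\bigr)$ that the paper exhibits in its figure, but it gets there by a genuinely different argument. The paper normalizes each class by pushing an arrow north and west as far as possible, and proves uniqueness of this extremal representative from a convexity property of the overlap region $\overline{D}\cap(\widehat{D}+{\bf v})$ (if it contains two boxes in a row or column it contains all boxes between them), which reduces connectivity to lattice paths with no opposite steps. You instead fix the displacement $(-k,l)$, show the sources in row $b$ form the interval $I_b=[\lambda_b,\lambda_{b+l}+k-1]$, and identify equivalence classes with maximal runs of rows whose consecutive intervals overlap; the non-escape condition, the end-of-run condition at the top row, and the arm/leg conditions characterizing a box of $D$ then collapse to the same three inequalities $\lambda_{b_1}\ge k$ and $\lambda_{b_1+l+1}\le\lambda_{b_1}-k<\lambda_{b_1+l}$. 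I checked the computations: $I_b$ does capture exactly the admissible sources (escape sources $a<k$ included, since $\lambda_{b+l}+k-1\ge k-1$), the overlap criterion correctly reduces to the single inequality $\lambda_b\le\lambda_{b+l+1}+k-1$ by monotonicity, and the escaping class is precisely the unique infinite run with $\lambda_{b_1}<k$. What your route buys is explicitness: it fully verifies the connectivity analysis that the paper compresses into the convexity remark. What the paper's route buys is symmetry: the same normalization argument, run south and east instead of north and west, immediately yields Theorem \ref{Theorem about boxes outside}, whereas your interval analysis would have to be rerun reading off bottom rows of runs (and right endpoints of intervals) to recover that companion statement.
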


\begin{proof}
Let us move a northwest pointing arrow to the north and to the west as much as possible. If it is not escaping, there will be a unique representative in the class such that it is impossible to further move it north or west. Indeed, two arrows with the same displacement vector ${\bf v}$ (i.e. the same direction and length) belong to the same equivalence class if and only if their heads can be connected by a lattice path staying inside the intersection $\overline{D}\cap (\widehat{D}+{\bf v}).$ Note that this intersection satisfy the following property: if it contains two boxes in the same row or column, then it contains all the boxes between them. It follows that it is enough to consider paths that do not contain steps in opposite directions. Therefore, two different arrows which cannot be moved north or west cannot be equivalent.  

Suppose that the resulting arrow is $(r,s)\rightarrow (k,m).$ Since it is not escaping, we automatically get $r,s,k,m \ge 0.$ Since we cannot move it north anymore, we have $(k,m+1)\in \overline{D}.$ Since we cannot move it west, we have $(r-1,s)\in \widehat{D}.$ It is not hard to see that there is exactly one such arrow corresponding to each box $(k,s)\in D.$ Indeed, we have $r=k+a(k,s)+1$ and $m=s+l(k,s).$ We illustrate this in Figure \ref{box inside the diagram}.

\begin{figure}
\centering
\begin{tikzpicture}
\filldraw [color=gray!50!white, fill=gray!40!white] (-0.5,1)--(-0.5,1.5)--(4,1.5)--(4,1)--(-0.5,1); 
\filldraw [color=gray!50!white, fill=gray!40!white] (0.5,-0.5)--(0.5,3.5)--(1,3.5)--(1,-0.5)--(0.5,-0.5); 
\filldraw [color=gray!50!white, fill=gray!40!white] (-0.5,3)--(-0.5,3.5)--(1,3.5)--(1,3)--(-0.5,3); 
\filldraw [color=gray!50!white, fill=gray!40!white] (3.5,-0.5)--(3.5,1.5)--(4,1.5)--(4,-0.5)--(3.5,-0.5); 
\draw [step=0.5, color=gray!50!white, very thin] (0,0) grid (5,5);
\draw [very thick] (0,6)--(0,0)--(6,0); 
\draw [very thick] (0,3.5)--(1.5,3.5)--(1.5,2)--(3.5,2)--(3.5,0); 
\filldraw [very thick, fill=gray!30!white] (0.5,1)--(0.5,1.5)--(1,1.5)--(1,1)--(0.5,1);

\draw [thick, <->] (0.75,1.25)--(0.75,3.25);
\draw (0.25,2.25) node {\footnotesize $leg$}; 
\draw [thick, <->] (0.75,1.25)--(3.25,1.25);
\draw (2,0.75) node {$arm$}; 
\draw [very thick, ->] (3.75,1.25)--(0.75,3.25);
\draw [dashed] (3.5,1)--(4,1)--(4,1.5)--(3.5,1.5)--(3.5,1);
\draw [dashed] (0.5,3)--(1,3)--(1,3.5)--(0.5,3.5)--(0.5,3);

\draw (-0.25,1.25) node {$s$};
\draw (0.75,-0.25) node {$k$};
\draw (-0.25,3.25) node {$m$};
\draw (3.75,-0.25) node {$r$};

\end{tikzpicture}
\caption{\footnotesize A northwest pointing arrow which cannot be moved north or west corresponds to a box inside the diagram $D.$}
\label{box inside the diagram}
\end{figure}
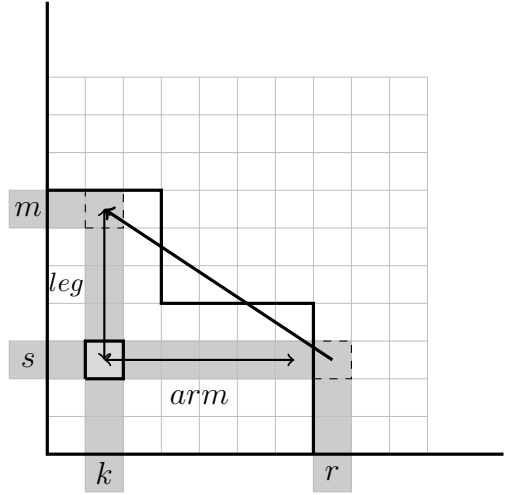 
\end{proof}

One can modify the above construction and get the following result:

\begin{theorem}\label{Theorem about boxes outside}
The equivalence classes of all northwest pointing arrows are in natural one-to-one correspondence with the boxes of the complement $\overline{D}.$
\end{theorem}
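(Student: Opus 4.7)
The plan is to mirror the proof of Theorem \ref{Theorem about boxes inside}, replacing the northwest-maximal canonical form by a southeast-maximal one. For each box $c = (k, s) \in \overline{D}$ with arm length $a = a(c)$ and leg length $l = l(c)$, I would associate the arrow
$$
A_c := \bigl[(k, s-l)\to (k-a-1, s)\bigr].
$$
The tail $(k, s-l)$ sits at the bottom cell of the leg of $c$ and lies in $\overline{D}$; the head $(k-a-1, s)$ sits immediately to the left of the arm of $c$ and lies in $\widehat{D}$ (either in $D$, or outside the positive quadrant when the arm of $c$ reaches the left wall). The first step is to verify that $A_c$ is a northwest-pointing arrow in $A_{nw}$.

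Next I would check that $A_c$ is \emph{southeast-maximal}: a southward move is blocked because $(k, s-l-1) \in \widehat{D}$ by the definition of the leg of $c$, and an eastward move is blocked because the head would land on $(k-a, s)$, which is the leftmost cell of the arm (or $c$ itself when $a=0$) and therefore lies in $\overline{D}$, not $\widehat{D}$. This property is dual to the NW-maximality used in Theorem \ref{Theorem about boxes inside}.

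The rest of the argument then proceeds exactly as in the proof of Theorem \ref{Theorem about boxes inside}, with ``south/east'' in place of ``north/west''. Every equivalence class contains an SE-maximal representative obtained by performing S and E moves while possible; this process terminates because the tail's $y$-coordinate cannot drop below $0$ and the head cannot be moved east once it has left $D$. Uniqueness of the SE-maximal representative is proved by the same observation as in the inside case: two arrows with common displacement $\mathbf{v}$ are equivalent iff their tails can be joined by a lattice path inside the row- and column-convex region $\overline{D}\cap(\widehat{D}-\mathbf{v})$, so the path may be taken without opposing steps, and two distinct SE-maximal arrows cannot be connected by such a monotone path. Surjectivity of $c\mapsto A_c$ is then immediate by reading off the box $(r, d_0)\in\overline{D}$ with arm $r-c_0-1$ and leg $d_0-s_0$ from any SE-maximal arrow $(r, s_0)\to(c_0, d_0)$.

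The main obstacle I expect is the uniqueness step: one has to verify that the row- and column-convexity of $\overline{D}\cap(\widehat{D}-\mathbf{v})$ continues to hold once the off-grid strip $\{(c,d):c<0\text{ or }d<0\}$ contributes to $\widehat{D}$, so that the no-opposing-step reduction used in the proof of Theorem \ref{Theorem about boxes inside} carries over cleanly to the escaping case.
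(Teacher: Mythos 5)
Your proposal is correct and follows essentially the same route as the paper: the paper likewise pushes each arrow south and east as far as possible and reads off the box $(r,m)\in\overline{D}$ from the resulting arrow via $s=m-l(r,m)$ and $k=r-a(r,m)-1$, which is exactly your canonical arrow $A_c$. The convexity worry you flag at the end is harmless, since $\widehat{D}$ remains row- and column-convex even with the off-grid strip included (each row $y\ge 0$ meets it in $\{x<\lambda_y\}$ and each row $y<0$ in all of $\mathbb{Z}$, and similarly for columns), so the no-opposing-steps reduction carries over to escaping classes.
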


\begin{proof}
Let us now move the arrow to the south and to the east as much as possible. Suppose that the resulting arrow is $(r,s)\rightarrow (k,m).$ Since we cannot move it east anymore, we have $(k+1,m)\in \overline{D}.$ Since we cannot move it south, we have $(r,s-1)\in \widehat{D}.$ It is not hard to see that there is exactly one such arrow corresponding to each box $(r,m)\in \overline{D}.$ Indeed, we have $s=m-l(r,m)$ and $k=r-a(r,m)-1.$ We illustrate this on the Figure \ref{box outside the diagram}.
\begin{figure}
\centering
\begin{tikzpicture}
\filldraw [color=gray!50!white, fill=gray!40!white] (-0.5,0)--(-0.5,0.5)--(4.5,0.5)--(4.5,0)--(-0.5,0); 
\filldraw [color=gray!50!white, fill=gray!40!white] (1,-0.5)--(1,2.5)--(1.5,2.5)--(1.5,-0.5)--(1,-0.5); 
\filldraw [color=gray!50!white, fill=gray!40!white] (-0.5,2)--(-0.5,2.5)--(4.5,2.5)--(4.5,2)--(-0.5,2); 
\filldraw [color=gray!50!white, fill=gray!40!white] (4,-0.5)--(4,2.5)--(4.5,2.5)--(4.5,-0.5)--(4,-0.5); 
\draw [step=0.5, color=gray!50!white, very thin] (0,0) grid (5,5);
\draw [very thick] (0,6)--(0,0)--(6,0); 
\draw [very thick] (0,3.5)--(1.5,3.5)--(1.5,2)--(3.5,2)--(3.5,0); 
\filldraw [very thick, fill=gray!30!white] (4,2)--(4,2.5)--(4.5,2.5)--(4.5,2)--(4,2);

\draw [thick, <->] (4.25,0.25)--(4.25,2.25);
\draw (4.75,1.25) node {\footnotesize $leg$}; 
\draw [thick, <->] (1.75,2.25)--(4.25,2.25);
\draw (3,2.75) node {$arm$}; 
\draw [very thick, ->] (4.25,0.25)--(1.25,2.25);
\draw [dashed] (4,0)--(4.5,0)--(4.5,0.5)--(4,0.5)--(4,0);
\draw [dashed] (1,2)--(1.5,2)--(1.5,2.5)--(1,2.5)--(1,2);

\draw (-0.25,0.25) node {$s$};
\draw (1.25,-0.25) node {$k$};
\draw (-0.25,2.25) node {$m$};
\draw (4.25,-0.25) node {$r$};

\end{tikzpicture}
\caption{\footnotesize A northwest pointing arrow which cannot be moved south or east corresponds to a box outside the diagram $D.$}
\label{box outside the diagram}
\end{figure}
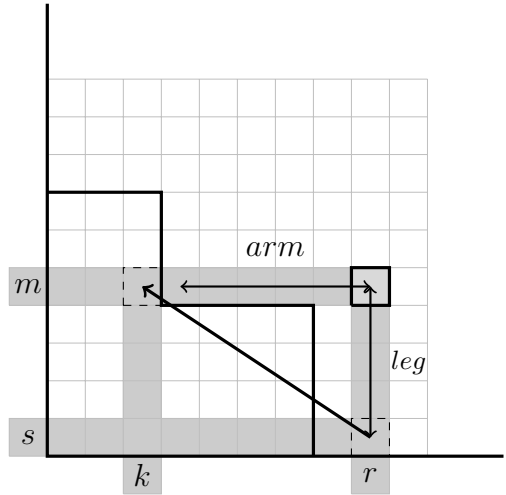
\end{proof}

Note that there is exactly one class of escaping arrows for each fixed direction and length. Note also that direction and length of arrows in an equivalence class are prescribed by the length of the arm and the leg of the corresponding box. More concretely, for a box $c$ the corresponding vector is $(-a(c)-1,l(c)).$ This is valid both for the correspondence from Theorem \ref{Theorem about boxes inside} and the correspondence from Theorem \ref{Theorem about boxes outside}. This completes the proof of the Theorem \ref{Theorem: hooks}.

\section{Inside a Rectangle.}\label{Section: inside a rectangle}

In order to apply the construction from the previous section to prove the Loehr-Warrington's formulas, we need to modify it to deal with the case when the diagram $D$ is inscribed in a right triangle. Let us recall some notations from the Introduction.

Let $(p,q)$ be positive coprime integers. Let $K$ be a big enough integer, so that $D$ fits into the $P\times Q$ rectangle $R_{P,Q}$ under the diagonal, where $P=Kp$ and $Q=Kq.$ In other words, for all boxes $(x,y)\in D$ one has $qx+py\le Kpq-p-q$ (remember that the southwest corner of $D$ is $(0,0)$). As before, let $R_{P,Q}^+:=\{(x,y)\in (\mathbb Z_{\ge 0})^2| qx+py\le Kpq-p-q\}$ denote the set of boxes below the diagonal in $R_{P,Q}$. We get $D\subset R_{P,Q}^+\subset R_{P,Q}.$

When modifying the results of the previous section to this new setup, one runs into an immediate problem: it might happen that the box $c\in \overline{D}$ corresponding to a class of arrows is outside the rectangle $R_{P,Q}.$ This might happen in two cases. First, the arrow might be not steep enough, so that as we move it east its tail moves outside the rectangle. And second, it might be impossible to move an escaping arrow south enough for its head to be below the line $y=Q.$ Fortunately, both problems can be handled if one restricts ones attention to ``steep enough'' arrows only.

\begin{theorem}\label{theorem inside rectangle}
Fix non-negative integers $a$ and $l$ such that $\frac{l}{a+1}\ge \frac{q}{p}.$ Then one has two cases:
\begin{enumerate}
\item If $(a,Q-1-l)\in D,$ then the number of boxes $c$ inside the diagram $D$ such that $l(c)=l$ and $a(c)=a$ is equal to the number of boxes in the complement $R_{P,Q}\backslash D$ with the same property. 
\item If $(a,Q-1-l)\in R_{P,Q}\backslash D,$ then the number of boxes $c$ inside the diagram $D$ such that $l(c)=l$ and $a(c)=a$ is one less than the number of boxes in the complement $R_{P,Q}\backslash D$ with the same property.
\end{enumerate}
\end{theorem}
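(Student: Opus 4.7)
My plan is to reduce Theorem~\ref{theorem inside rectangle} to Theorem~\ref{Theorem about boxes outside} by tracking which boxes of $\overline{D}$ with arm $a$ and leg $l$ happen to lie outside the rectangle $R_{P,Q}$. Combining Theorems~\ref{Theorem about boxes inside} and~\ref{Theorem about boxes outside} yields
$$
|\{c\in\overline{D}:a(c)=a,\,l(c)=l\}|=|\{c\in D:a(c)=a,\,l(c)=l\}|+1,
$$
so splitting $\overline{D}$ into its parts inside and outside $R_{P,Q}$ reduces the statement to verifying that, under the slope condition $l/(a+1)\ge q/p$, the number of boxes of $\overline{D}\setminus R_{P,Q}$ with arm $a$ and leg $l$ equals $1$ if $(a,Q-1-l)\in D$ and $0$ if $(a,Q-1-l)\in R_{P,Q}\setminus D$.

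I would then analyze the two strips making up $(\mathbb Z_{\ge 0})^2\setminus R_{P,Q}$ separately. In the upper strip $\{m\ge Q\}$, row $m$ contains no box of $D$, so the arm of any $(r,m)\in\overline{D}$ lying there equals $r$, and its leg equals $m$ minus the height of column $a$ in $D$. Requiring $a(c)=a$ and $l(c)=l$ pins the box to $(a,d_a+l)$, where $d_a$ denotes that column height, and this candidate actually lies in the strip precisely when $d_a+l\ge Q$, equivalently when $(a,Q-1-l)\in D$. Hence the upper strip already realizes the dichotomy demanded by the theorem, and the overlap $\{r\ge P,\,m\ge Q\}$ is ruled out for free, since the identity $r=a\le P-1$ forced in the upper strip (using the implicit assumption $(a,Q-1-l)\in R_{P,Q}$) is incompatible with $r\ge P$.

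The main obstacle is therefore to show that the right strip $\{r\ge P\}$ contributes nothing under the slope hypothesis. In that strip column $r$ is empty of $D$-boxes, so $l(c)=l$ forces $m=l$ and $a(c)=a$ forces $r=\lambda_l+a$, where $\lambda_l$ is the length of row $l$ of $D$; the strip condition becomes $\lambda_l+a\ge P$. To rule this out, I would apply $D\subset R^+_{P,Q}$ to the rightmost box $(\lambda_l-1,l)$ of row $l$, obtaining $q\lambda_l\le p(Kq-1-l)$, and combine with the slope condition rewritten as $q(a+1)\le pl$, which yields $qa<p(l+1)$. A short manipulation of these two inequalities gives $\lambda_l+a<Kp=P$, contradicting the strip condition. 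This is the only step where the slope hypothesis is used, and it is exactly what makes the rectangular version of Theorem~\ref{Theorem: hooks} hold without any further correction beyond the one dictated by the position of $(a,Q-1-l)$.
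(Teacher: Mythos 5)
Your argument is correct, and it gets to the theorem by a genuinely different route than the paper. The paper stays inside the arrow-class formalism of Section \ref{Section: inside the positive quadrant}: it shows that under the slope condition a non-escaping class of displacement $(-a-1,l)$ always has its canonical southeast representative inside the rectangle (its tail cannot cross the diagonal), so only the single escaping class is at risk, and it decides that case by testing whether the arrow $(a,Q-1-l)\to(-1,Q-1)$ can be pushed southeast into $R_{P,Q}$. You instead quote Theorem \ref{Theorem: hooks} as a black box and count directly which boxes of $\overline{D}$ with arm $a$ and leg $l$ escape the rectangle, using the explicit formulas $a(r,m)=r-\lambda_m$ and $l(r,m)=m-\mu_r$ for complement boxes: the upper strip contributes the unique candidate $(a,\mu_a+l)$ exactly when $(a,Q-1-l)\in D$, and the right strip's candidate $(\lambda_l+a,l)$ is killed by combining $q\lambda_l\le p(Kq-1-l)$ with $q(a+1)\le pl$. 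The two proofs are dual --- your upper-strip candidate is precisely the box representing the paper's escaping class in the case where that box leaves the rectangle --- but yours is more computational and isolates the role of the slope hypothesis in the single inequality $\lambda_l+a<P$, at the cost of being longer than the paper's reuse of the arrow-moving machinery. Two small points to tighten: handle the degenerate case $\lambda_l=0$ in the right strip separately (there is no rightmost box of row $l$ to which to apply $D\subset R^+_{P,Q}$, but then $r=a\le P-1$ finishes it immediately), and state explicitly that the theorem tacitly assumes $(a,Q-1-l)\in R_{P,Q}$, which you already flag and which is harmless since otherwise both counts vanish.
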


\begin{proof}
With the condition $\frac{l}{a+1}\ge \frac{q}{p}$ on the slope of arrows, one cannot move a non-escaping arrow so that its tail is outside the triangle $R_{P,Q}^+.$ Indeed, otherwise its head is also above the diagonal, which contradicts the condition $D\subset R_{P,Q}^+.$ Therefore, the only class of arrows that might not be represented by a box in the complement $R_{P,Q}\backslash D$ is the escaping class. 

Now, if $(a,Q-1-l)\in R_{P,Q}\backslash D$ then the arrow $(a,Q-1-l)\rightarrow (-1,Q-1)$ belongs to the escaping class. It is easy to see that in this case the box $c\in\overline{D}$ representing the escaping class is inside the rectangle $R_{P,Q}.$ Otherwise, the box is outside the rectangle. Indeed, if the box is inside then one should be able to move the arrow so that its head is at $(-1,Q-1).$ We illustrate the proof on the Figure \ref{box inside rectangle}.
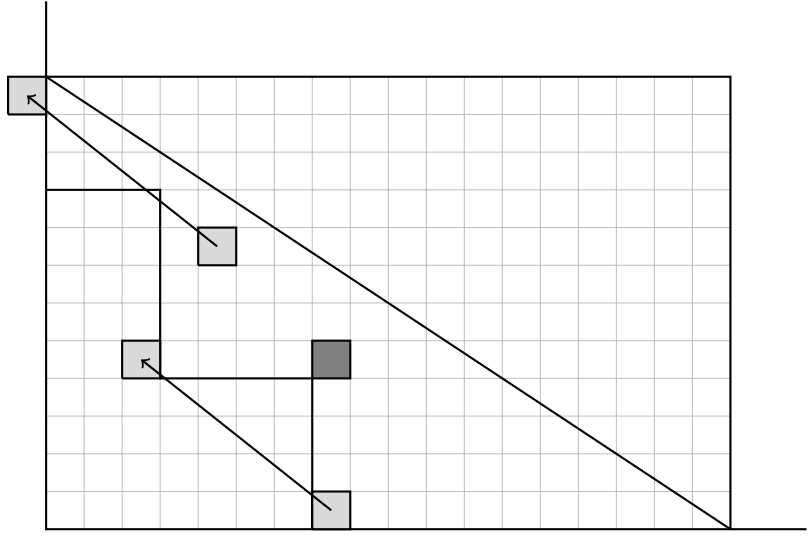
\begin{figure}
\centering
\begin{tikzpicture}

\draw [step=0.5, color=gray!50!white, very thin] (0,0) grid (9,6);
\draw [thick] (0,7)--(0,0)--(10,0); 
\draw [thick] (0,4.5)--(1.5,4.5)--(1.5,2)--(3.5,2)--(3.5,0);
\draw [thick] (0,6)--(9,0);
\draw [thick] (0,6)--(9,6)--(9,0);

\filldraw [thick, fill=gray!30!white] (-0.5,5.5)--(-0.5,6)--(0,6)--(0,5.5)--(-0.5,5.5); 
\filldraw [thick, fill=gray!30!white] (2,3.5)--(2,4)--(2.5,4)--(2.5,3.5)--(2,3.5);
\draw [thick, ->] (2.25,3.75)--(-0.25,5.75);

\filldraw [thick, fill=gray!30!white] (1,2)--(1,2.5)--(1.5,2.5)--(1.5,2)--(1,2); 
\filldraw [thick, fill=gray!30!white] (3.5,0)--(3.5,0.5)--(4,0.5)--(4,0)--(3.5,0);
\draw [thick, ->] (3.75,0.25)--(1.25,2.25);

\filldraw [thick, fill=gray] (3.5,2)--(3.5,2.5)--(4,2.5)--(4,2)--(3.5,2);

\end{tikzpicture}
\caption{\footnotesize Two arrows representing the same escaping class. The top one is $(a,Q-1-l)\rightarrow (-1,Q-1),$ and the bottom one cannot be moved south or east. The class corresponds to the dark gray box in the complement $R_{P,Q}\backslash D$.}
\label{box inside rectangle}
\end{figure}
\end{proof}

Applying Theorem \ref{theorem inside rectangle} to all pairs of numbers $a$ and $l$ satisfying the condition $\frac{l}{a+1}\ge \frac{q}{p}$ one gets the following corollary:

\begin{corollary}\label{GM bijection}
The number of boxes $c$ inside $D$ such that $\frac{l(c)}{a(c)+1}\ge \frac{q}{p}$ plus the number of boxes in $R_{P,Q}^+\backslash D$ is equal to the number of boxes $c'$ in $R_{P,Q}\backslash D$ such that $\frac{l(c')}{a(c')+1}\ge \frac{q}{p}.$
\end{corollary}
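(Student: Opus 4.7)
The plan is to sum the local identity of Theorem \ref{theorem inside rectangle} over all relevant pairs $(a,l)$ and to recognize the resulting correction term as $|R_{P,Q}^+\setminus D|$ via the simple change of variables $(x,y) := (a, Q-1-l)$. First I would restrict attention to pairs $(a,l)$ of non-negative integers with $l/(a+1) \ge q/p$, $a \le P-1$, and $l \le Q-1$. Since $D \subset R_{P,Q}^+ \subset R_{P,Q}$, any box inside $D$ or in $R_{P,Q}\setminus D$ has arm length at most $P-1$ and leg length at most $Q-1$, so no other pairs $(a,l)$ contribute to either count appearing in the corollary. For each admissible pair the point $(a, Q-1-l)$ lies in $R_{P,Q}$, so exactly one of the two cases of Theorem \ref{theorem inside rectangle} applies and yields
\[
\#\{c \in R_{P,Q}\setminus D : a(c)=a,\ l(c)=l\} - \#\{c \in D : a(c)=a,\ l(c)=l\} = \chi(a,l),
\]
where $\chi(a,l) := 1$ if $(a, Q-1-l) \in R_{P,Q}\setminus D$ and $\chi(a,l) := 0$ if $(a, Q-1-l) \in D$.

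Next I would sum this identity over all admissible $(a,l)$. The left-hand side collapses to the difference $\#\{c' \in R_{P,Q}\setminus D : l(c')/(a(c')+1) \ge q/p\} - \#\{c \in D : l(c)/(a(c)+1) \ge q/p\}$, while the right-hand side becomes the number of admissible pairs $(a,l)$ for which $(a, Q-1-l)$ lies in $R_{P,Q}\setminus D$. I would then apply the substitution $(x,y) := (a, Q-1-l)$: using $P = Kp$ and $Q = Kq$, the slope condition $l/(a+1) \ge q/p$ rewrites as $p(Q-1-y) \ge q(x+1)$, i.e.\ $qx + py \le Kpq - p - q$, which is precisely the defining condition for $(x,y) \in R_{P,Q}^+$. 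Together with $(x,y) \notin D$, this identifies the pairs counted by $\sum \chi(a,l)$ with the boxes of $R_{P,Q}^+\setminus D$, and rearranging gives the corollary.

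The only subtlety worth flagging is the bookkeeping around the substitution: one must check that the slope inequality $l/(a+1) \ge q/p$ for $(a,l)$ really does match the sub-diagonal condition for the transformed point $(a, Q-1-l)$, and that the admissibility bounds ($a \le P-1$, $l \le Q-1$) correspond exactly to $(x,y) \in R_{P,Q}$. Both checks are short arithmetic manipulations using $P = Kp$, $Q = Kq$, and once they are in place the conceptual content of the corollary is entirely carried by Theorem \ref{theorem inside rectangle}.
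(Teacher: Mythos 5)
Your proposal is correct and follows essentially the same route as the paper: sum the two cases of Theorem \ref{theorem inside rectangle} over the relevant pairs $(a,l)$ and identify the pairs falling into the second case with the boxes of $R^+_{P,Q}\setminus D$ via $(x,y)=(a,Q-1-l)$, using the equivalence $\frac{l}{a+1}\ge\frac{q}{p}\Leftrightarrow qx+py\le Kpq-p-q$. The paper's own proof is just a terser statement of this same computation, so the extra bookkeeping you supply (the bounds $a\le P-1$, $l\le Q-1$ and the vanishing of both counts outside that range) is a harmless elaboration rather than a different argument.
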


\begin{proof}
Indeed, for $(a,Q-1-l)\in R_{P,Q}$ one has 
$$
\frac{l}{a+1}\ge \frac{q}{p}\ \Leftrightarrow\ (a,Q-1-l)\in R^+_{P,Q}.
$$
Therefore, the number of boxes in $R^+_{P,Q}\backslash D$ is equal to the number of pairs $(a,l)$ satisfying the second part of Theorem \ref{theorem inside rectangle}. 
\end{proof}

In our joint paper with Eugene Gorsky \cite{GM11}, we proved this corollary by constructing an explicit bijection in the case when $K=1.$ 

Note that using the southeast pointing arrows instead of northwest, one obtains a similar result about boxes $c\in D$ satisfying $\frac{a(c)}{l(c)+1}\ge \frac{p}{q}$ or, equivalently, $\frac{l(c)+1}{a(c)}\le \frac{q}{p}:$ 

\begin{theorem}\label{theorem inside rectangle southeast}
Fix non-negative integers $a$ and $l$ such that $\frac{l+1}{a}\le \frac{q}{p}.$ Then one has two cases:
\begin{enumerate}
\item If $(P-1-a,l)\in D,$ then the number of boxes $c$ inside the diagram $D$ such that $l(c)=l$ and $a(c)=a$ is equal to the number of boxes in the complement $R_{P,Q}\backslash D$ with the same property. 
\item If $(P-1-a,l)\in R_{P,Q}\backslash D,$ then the number of boxes $c$ inside the diagram $D$ such that $l(c)=l$ and $a(c)=a$ is one less than the number of boxes in the complement $R_{P,Q}\backslash D$ with the same property.
\end{enumerate}
\end{theorem}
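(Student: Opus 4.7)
The plan is to mirror the proof of Theorem \ref{theorem inside rectangle} almost verbatim, replacing the set $A_{nw}$ of northwest pointing arrows with the set
\[
A_{se} := \{[(a,b) \to (c,d)] \in A : c > a, \ d \le b\}
\]
of southeast and south pointing arrows. First, I would reproduce the analogs of Theorems \ref{Theorem about boxes inside} and \ref{Theorem about boxes outside}: equivalence classes of non-escaping arrows in $A_{se}$ biject with boxes of $D$ via the rule ``move south and east as far as possible,'' while equivalence classes of all arrows in $A_{se}$ biject with boxes of $\overline{D}$ via ``move north and west as far as possible.'' The pinning arguments carry over word for word from Section \ref{Section: inside the positive quadrant}, and a direct check shows that the displacement vector of a class representing a box $c$ is now $(a(c), -l(c)-1)$, so fixing $a$ and $l$ determines the direction and length of every arrow representing a box with those arm and leg lengths.

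Second, I would verify that the hypothesis $\frac{l+1}{a} \le \frac{q}{p}$ plays the role played by $\frac{l}{a+1} \ge \frac{q}{p}$ in Theorem \ref{theorem inside rectangle}. For an arrow with displacement $(a,-l-1)$ the identity
\[
q(r+a) + p(s-l-1) \;=\; (qr+ps) + \bigl(qa - p(l+1)\bigr)
\]
shows that if the tail $(r,s)$ is above the diagonal, so is the head $(r+a,s-l-1)$, whenever $qa - p(l+1) \ge 0$, i.e., whenever $\frac{l+1}{a} \le \frac{q}{p}$. Since the head of a non-escaping southeast arrow lies in $D \subset R_{P,Q}^+$, this would be a contradiction; hence every non-escaping class has its box-in-$\overline{D}$ representative already inside $R_{P,Q}$.

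Third, a southeast arrow can escape only through $y<0$ (the $x$-coordinate strictly increases from tail to head), so the distinguished escape arrow---the mirror of the arrow $(a, Q-1-l) \to (-1, Q-1)$ used in Theorem \ref{theorem inside rectangle}---is
\[
(P-1-a,\; l) \;\longrightarrow\; (P-1,\; -1),
\]
whose head is automatically in $\widehat{D}$. This arrow lies in $A_{se}$ precisely when $(P-1-a, l) \in \overline{D} \cap R_{P,Q} = R_{P,Q}\setminus D$, and by the same reasoning as in Theorem \ref{theorem inside rectangle} this is exactly the condition for the box-in-$\overline{D}$ representative of the unique escape class to lie inside the rectangle. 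Counting equivalence classes versus box representatives in $R_{P,Q}\setminus D$ then yields the two cases of the statement.

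I expect the main technical obstacle to be pinning down the correct displacement vector $(a,-l-1)$ rather than the naive guess $(a+1,-l)$: the asymmetry arises because, for boxes in $\overline{D}$, the arm extends left and the leg extends down toward $D$ (Figure \ref{armleg outside the diagram}), so for southeast arrows the ``$+1$'' offset falls on the leg side rather than the arm side. This asymmetry is precisely what makes the slope condition appear as $\frac{l+1}{a} \le \frac{q}{p}$ rather than as a shifted variant.
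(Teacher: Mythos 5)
Your proposal is correct and follows exactly the route the paper intends: its entire proof of this theorem is the one-line remark that one repeats the argument of Theorem \ref{theorem inside rectangle} with southeast pointing arrows, which is precisely what you carry out. Your identification of the displacement vector $(a,-l-1)$, the role of the slope condition $\frac{l+1}{a}\le\frac{q}{p}$ in keeping tails below the diagonal, and the distinguished escaping arrow $(P-1-a,l)\to(P-1,-1)$ are all the right mirror images of the paper's northwest-case ingredients.
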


\begin{proof}
The same as for Theorem \ref{theorem inside rectangle} with the southeast pointing arrows instead of the northwest.
\end{proof}

Similar to before, one can apply Theorem \ref{theorem inside rectangle southeast} to all pairs of numbers $(a,l)$ satisfying the condition $\frac{l+1}{a}\le \frac{q}{p}$ and get the following corollary:

\begin{corollary}\label{GM bijection southeast}
The number of boxes $c$ inside $D$ such that $\frac{l(c)+1}{a(c)}\le \frac{q}{p}$ plus the number of boxes in $R_{P,Q}^+\backslash D$ is equal to the number of boxes $c'$ in $R_{P,Q}\backslash D$ such that $\frac{l(c')+1}{a(c')}\le \frac{q}{p}.$
\end{corollary}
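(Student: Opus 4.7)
The plan is to mimic the proof of Corollary \ref{GM bijection}, with the northwest-pointing arrows replaced by southeast-pointing ones. The role played there by the escaping-class representative $(a,Q-1-l)$ should now be taken over by $(P-1-a,l)$.

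First, I would sum the conclusion of Theorem \ref{theorem inside rectangle southeast} over all pairs $(a,l)$ of non-negative integers with $\frac{l+1}{a}\le \frac{q}{p}$. The left side contributes the number of boxes $c\in D$ with $\frac{l(c)+1}{a(c)}\le \frac{q}{p}$ plus the count of pairs $(a,l)$ falling under case $(2)$, i.e.\ those for which $(P-1-a,l)\in R_{P,Q}\setminus D$; the right side gives the number of boxes $c'\in R_{P,Q}\setminus D$ satisfying $\frac{l(c')+1}{a(c')}\le \frac{q}{p}$.

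The key step is to identify the deficit count with $|R^+_{P,Q}\setminus D|$. For this I would verify the pointwise equivalence
$$
(P-1-a,\, l)\in R^+_{P,Q}\ \Longleftrightarrow\ \frac{l+1}{a}\le \frac{q}{p},
$$
which follows by a direct computation from the defining inequality $qx+py\le Kpq-p-q$ of $R^+_{P,Q}$ together with $P=Kp$ (substitute $x=P-1-a$, $y=l$, and simplify). Under this equivalence, $(a,l)\mapsto (P-1-a,l)$ is a bijection between the pairs $(a,l)$ in case $(2)$ and the boxes of $R^+_{P,Q}\setminus D$, so the deficit count equals $|R^+_{P,Q}\setminus D|$. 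Rearranging the summed identity yields the claim.

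I do not anticipate a serious obstacle: the argument is formally parallel to Corollary \ref{GM bijection}, and the only computation requiring any care is the coordinate-level equivalence displayed above, which relies solely on the normalization $P=Kp$, $Q=Kq$.
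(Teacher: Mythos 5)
Your proposal is correct and follows essentially the same route as the paper: the paper's (implicit) proof likewise sums Theorem \ref{theorem inside rectangle southeast} over all admissible pairs $(a,l)$ and identifies the case-$(2)$ deficit with $\left|R^+_{P,Q}\backslash D\right|$ via the equivalence $(P-1-a,l)\in R^+_{P,Q}\Leftrightarrow \frac{l+1}{a}\le\frac{q}{p}$, exactly mirroring the proof of Corollary \ref{GM bijection}. Your coordinate computation checks out ($q(Kp-1-a)+pl\le Kpq-p-q$ reduces to $p(l+1)\le qa$), so there is nothing to add.
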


\section{Loehr-Warrington's identities.}\label{Section: LW identities}

In this section we apply the results of the previous two sections to prove Theorem \ref{Theorem: LW formulas}. Let $D\subset R^+_{P,Q}$ be a Young diagram. We will use the same notations as in the introduction: $B(D)$ is the boundary path, $M(D)$ is the boundary graph, $E(D)$ is the Eulerian tour on $M(D)$ defined by $B(D),$\ $V_M\subset\mathbb Z$ is the set of vertices of $M=M(D).$ For each vertex $v\in V_M,$\  $W_{in}(v)$ is the set of westward edges entering $v.$ Respectively, $N_{in}(v)$ is the set of northward edges entering $v.$ Note that the boxes of the rectangle $R_{P,Q}$ are in natural one-to-one correspondence with pairs of edges of $M(D),$ one northward, and one westward. Indeed, every row of $R_{P,Q}$  contains exactly one northward edge of $B(D),$ and every column contains exactly one westward edge. Moreover, boxes inside $D$ correspond to the pairs for which the northward edge goes before the westward in the Eulerian tour $E(D)$, and boxes in $R_{P,Q}\backslash D$ correspond to the pairs for which the westward edge goes first. The following Lemma follows immediately from the definitions:

\begin{lemma}\label{lemma:slope->edges}
Let $c\in D.$ Suppose that $e_c\in W_{in}(v)$ is the westward edge corresponding to $c,$ and $n_c\in N_{in}(w)$ is the northward edge corresponding to $c.$ Then $v=w+(a(c)+1)q-l(c)p.$ In particular,
\begin{enumerate}
\item One has $v=w$ if and only if $\frac{l(c)}{a(c)+1}=\frac{q}{p},$
\item One has $v<w$ if and only if $\frac{l(c)}{a(c)+1}>\frac{q}{p}.$
\end{enumerate}
Similarly, if $c\in R_{P,Q}\backslash D,$\ $e_c\in W_{in}(v),$ and $n_c\in N_{in}(w),$ then $w=v+a(c)q-(l(c)+1)p.$ In particular,
\begin{enumerate}
\item One has $v=w$ if and only if $\frac{l(c)+1}{a(c)}=\frac{q}{p},$
\item One has $v<w$ if and only if $\frac{l(c)+1}{a(c)}<\frac{q}{p}.$
\end{enumerate}
\end{lemma}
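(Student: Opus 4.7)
The plan is to verify both identities by direct computation, once one extends the vertex labeling of $B(D)$ to a global labeling of the lattice points of $R_{P,Q}$. First, I would declare the label of a lattice point $(x,y)\in R_{P,Q}$ to be $(P-x)q - yp$. A westward edge shifts $x$ by $-1$ (adding $q$) and a northward edge shifts $y$ by $+1$ (subtracting $p$), so this rule is consistent with the one prescribed in the introduction on $B(D)$, and it assigns the required value $0$ to the starting vertex $(P,0)$. The benefit is that the label of the entering vertex of any edge can now be read directly from the coordinates of its head.

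Second, I would locate $e_c$ and $n_c$ inside the rectangle. Let $h_x$ be the number of boxes of $D$ in column $x$, and $\ell_y$ the number of boxes in row $y$. The unique westward edge of $B(D)$ in column $x$ runs from $(x+1,h_x)$ to $(x,h_x)$, while the unique northward edge in row $y$ runs from $(\ell_y,y)$ to $(\ell_y,y+1)$. For $c=(x,y)\in D$, the definitions of arm and leg give $\ell_y = x + a(c) + 1$ and $h_x = y + l(c) + 1$; for $c=(x,y)\in R_{P,Q}\setminus D$, reading off Figure~\ref{armleg outside the diagram} instead gives $\ell_y = x - a(c)$ and $h_x = y - l(c)$.

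Third, I would substitute these expressions into the label formula for the entering vertices. For $c\in D$ one finds $v = (P-x)q - (y+l(c)+1)p$ and $w = (P-x-a(c)-1)q - (y+1)p$, and a one-line subtraction yields the desired identity $v-w = (a(c)+1)q - l(c)p$. The outside case is parallel and produces $w-v = a(c)q - (l(c)+1)p$. Each ``in particular'' clause then follows by rewriting the condition that the right-hand side vanish, or be negative, as a comparison of $q/p$ with the indicated slope, using positivity of $p$, $q$, and the relevant denominator $a(c)+1$ (respectively $a(c)$, with the usual convention that the ratio is $+\infty$ if $a(c)=0$, in which case one checks directly that $w<v$).

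The only real obstacle is keeping conventions straight: the orientation of $D$, the direction of travel along $B(D)$ from the southeast to the northwest corner, and especially the fact that arm and leg are measured eastward and northward for boxes of $D$ but westward and southward for boxes of $\overline{D}$. Once these are pinned down, the statement becomes an essentially mechanical verification.
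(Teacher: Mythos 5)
Your proposal is correct and is essentially the paper's own argument written out in full: the paper declares the lemma immediate from the definitions and illustrates it by counting the $a(c)+1$ westward and $l(c)$ northward steps of $B(D)$ between the two relevant vertices, which is exactly the subtraction your global labeling $(x,y)\mapsto (P-x)q-yp$ performs. Your bookkeeping of $\ell_y$, $h_x$, and the reversed arm/leg conventions for boxes of $R_{P,Q}\setminus D$ (including the $a(c)=0$ edge case) all checks out.
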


\begin{proof}
The proof is immediate from the definitions. We illustrate it on Figure \ref{figure:slope->edges}.
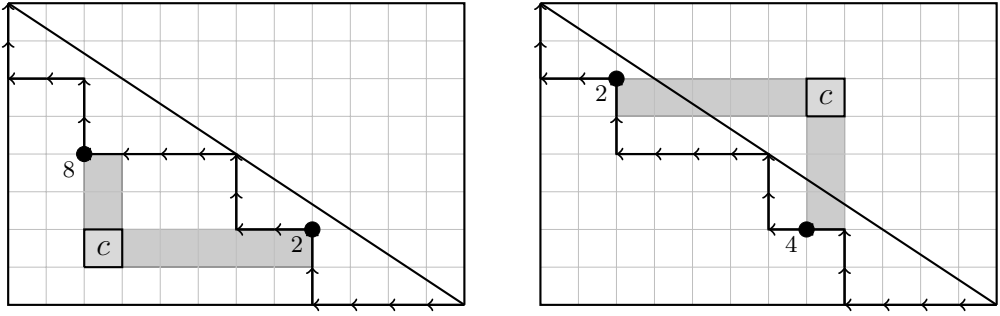
\begin{figure}
\centering
\begin{tikzpicture}
\filldraw [fill=gray!40!white] (1,1)--(1.5,1)--(1.5,2)--(1,2)--(1,1); 
\filldraw [fill=gray!40!white] (1.5,0.5)--(1.5,1)--(4,1)--(4,0.5)--(1.5,0.5); 

\draw [step=0.5, color=gray!50!white, very thin] (0,0) grid (6,4);
\draw [thick] (0,4)--(0,0)--(6,0); 
\draw [thick] (0,3)--(1,3)--(1,2)--(3,2)--(3,1)--(4,1)--(4,0);
\draw [thick] (0,4)--(6,0);
\draw [thick] (0,4)--(6,4)--(6,0);

\draw [thick, ->] (6,0)--(5.5,0);
\draw [thick, ->] (5.5,0)--(5,0);
\draw [thick, ->] (5,0)--(4.5,0);
\draw [thick, ->] (4.5,0)--(4,0);
\draw [thick, ->] (4,1)--(3.5,1);
\draw [thick, ->] (3.5,1)--(3,1);
\draw [thick, ->] (3,2)--(2.5,2);
\draw [thick, ->] (2.5,2)--(2,2);
\draw [thick, ->] (2,2)--(1.5,2);
\draw [thick, ->] (1.5,2)--(1,2);
\draw [thick, ->] (1,3)--(0.5,3);
\draw [thick, ->] (0.5,3)--(0,3);

\draw [thick, ->] (0,3)--(0,3.5);
\draw [thick, ->] (0,3.5)--(0,4);
\draw [thick, ->] (1,2)--(1,2.5);
\draw [thick, ->] (1,2.5)--(1,3);
\draw [thick, ->] (3,1)--(3,1.5);
\draw [thick, ->] (3,1.5)--(3,2);
\draw [thick, ->] (4,0)--(4,0.5);
\draw [thick, ->] (4,0.5)--(4,1);

\filldraw [thick, fill=gray!30!white] (1,0.5)--(1,1)--(1.5,1)--(1.5,0.5)--(1,0.5); 
\draw (1.25,0.75) node {$c$};

\filldraw [fill=black] (1,2) circle [radius=0.1];
\draw (0.8,1.8) node {\scriptsize $8$}; 

\filldraw [fill=black] (4,1) circle [radius=0.1];
\draw (3.8,0.8) node {\scriptsize $2$}; 

\filldraw [fill=gray!40!white] (10.5,1)--(11,1)--(11,2.5)--(10.5,2.5)--(10.5,1); 
\filldraw [fill=gray!40!white] (8,2.5)--(8,3)--(10.5,3)--(10.5,2.5)--(8,2.5); 

\draw [step=0.5, color=gray!50!white, very thin] (7,0) grid (13,4);
\draw [thick] (7,4)--(7,0)--(13,0); 
\draw [thick] (7,3)--(8,3)--(8,2)--(10,2)--(10,1)--(11,1)--(11,0);
\draw [thick] (7,4)--(13,0);
\draw [thick] (7,4)--(13,4)--(13,0);

\draw [thick, ->] (13,0)--(12.5,0);
\draw [thick, ->] (12.5,0)--(12,0);
\draw [thick, ->] (12,0)--(11.5,0);
\draw [thick, ->] (11.5,0)--(11,0);
\draw [thick, ->] (11,1)--(10.5,1);
\draw [thick, ->] (10.5,1)--(10,1);
\draw [thick, ->] (10,2)--(9.5,2);
\draw [thick, ->] (9.5,2)--(9,2);
\draw [thick, ->] (9,2)--(8.5,2);
\draw [thick, ->] (8.5,2)--(8,2);
\draw [thick, ->] (8,3)--(7.5,3);
\draw [thick, ->] (7.5,3)--(7,3);

\draw [thick, ->] (7,3)--(7,3.5);
\draw [thick, ->] (7,3.5)--(7,4);
\draw [thick, ->] (8,2)--(8,2.5);
\draw [thick, ->] (8,2.5)--(8,3);
\draw [thick, ->] (10,1)--(10,1.5);
\draw [thick, ->] (10,1.5)--(10,2);
\draw [thick, ->] (11,0)--(11,0.5);
\draw [thick, ->] (11,0.5)--(11,1);

\filldraw [thick, fill=gray!30!white] (10.5,2.5)--(10.5,3)--(11,3)--(11,2.5)--(10.5,2.5); 
\draw (10.75,2.75) node {$c$}; 

\draw (7.8,2.8) node {\scriptsize $2$}; 

\draw (10.3,0.8) node {\scriptsize $4$};

\filldraw [fill=black] (8,3) circle [radius=0.1];

\filldraw [fill=black] (10.5,1) circle [radius=0.1];

\end{tikzpicture}
\caption{\footnotesize On this picture we have $p=3,$ $q=2$. On the left we have a box $c\in D$ with $a(c)=5$ and $l(c)=2.$ The corresponding northward arrow belongs to $N_{in}(2),$ and the corresponding westward arrow belongs to $W_{in}(8),$ where $8=2+6\times 2-2\times 3,$ because there are $6=a(c)+1$ westward and $2=l(c)$ northward arrows between the corresponding vertices. Similarly, on right we have $c\in R_{P,Q}\backslash D$ with $a(c)=5$ and $l(c)=3.$ We see that there are $a(c)=5$ westward and $l(c)+1=4$ northward arrows between the corresponding vertices.}
\label{figure:slope->edges}
\end{figure}
\end{proof}

Finally, we use Lemma \ref{lemma:slope->edges} and Theorem \ref{theorem inside rectangle} to deduce Loehr-Warrington's formulas. Let us start with $\ctot_{\frac{q}{p}}:$

$$
c^-_{\frac{q}{p}}(D)=\left| \left\lbrace c\in D:\frac{l(c)+1}{a(c)}=\frac{q}{p}\right\rbrace \right|
=\left| \left\lbrace c\in R_{P,Q}\backslash D:\frac{l(c)+1}{a(c)}=\frac{q}{p}\right\rbrace \right|-
$$
$$ 
\ \ \ \ \ \ \ \ \ \ \ \ \ \ \ \ \ \ \ \ \ \ \ \ \ \ \ \ \ \ \ \ \ \ \ \ \ 
- \left| \left\lbrace (x,y)\in R_{P,Q}\backslash D: qx+py=Kpq-p-q\right\rbrace \right|.
$$
Indeed, points $(x,y)\in R_{P,Q}\backslash D$ such that $qx+py=Kpq-p-q$ are exactly those for which the arrow $(x,y)\rightarrow (P-1,-1)$ has the required slope $\frac{q}{p}$. Note that
$$
\left| \left\lbrace (x,y)\in R_{P,Q}\backslash D: qx+py=Kpq-p-q\right\rbrace \right| = K - |N_{in}(0)|. 
$$
\noindent Indeed, there are exactly $\gcd(P,Q)-1=K-1$ boxes $(x,y)$ in $R_{P,Q},$ such that $qx+py=Kpq-p-q,$ and $|N_{in}(0)|-1$ such boxes inside $D$ (all such boxes correspond to the vertices of the boundary path labeled by $0,$ and we always arrive at such vertices along northward edges). Finally, we subtract one for the initial vertex of the path. We conclude,

$$
\ctot_{\frac{q}{p}}(D)=c^-_{\frac{q}{p}}(D)+c^+_{\frac{q}{p}}(D)
$$
$$
=\left| \left\lbrace c\in R_{P,Q}\backslash D:\frac{l(c)+1}{a(c)}=\frac{q}{p}\right\rbrace \right| - K + |N_{in}(0)| + \left| \left\lbrace c\in D:\frac{l(c)}{a(c)+1}=\frac{q}{p}\right\rbrace \right|
$$
$$
=\sum\limits_{v\in V_M} |W_{in}(v)||N_{in}(v)| - K+|N_{in}(0)|.
$$
\noindent The last equality follows from the Lemma \ref{lemma:slope->edges}.

Statistic $\midd_{\frac{q}{p}}(D)$ can be treated similarly:

$$
\midd_{\frac{q}{p}}(D)=\left| \left\lbrace c\in D:\frac{l(c)}{a(c)+1}<\frac{q}{p}<\frac{l(c)+1}{a(c)}\right\rbrace \right|
$$
$$
=|D|-\left| \left\lbrace c\in D:\frac{l(c)}{a(c)+1}\ge\frac{q}{p}\right\rbrace \right|-\left| \left\lbrace c\in D:\frac{q}{p}\ge\frac{l(c)+1}{a(c)}\right\rbrace \right|
$$
$$
=|D|-\left| \left\lbrace c\in  D:\frac{l(c)}{a(c)+1}\ge\frac{q}{p}\right\rbrace \right|-\left| \left\lbrace c\in R_{P,Q}\backslash D:\frac{q}{p}\ge\frac{l(c)+1}{a(c)}\right\rbrace \right|+\left| R^+_{P,Q}\backslash D \right|
$$
$$
=|R^+_{P,Q}|-\sum\limits_{v,w\in V_M, v\le w} |W_{in}(v)||N_{in}(w)|.
$$
Here we first applied Theorem \ref{theorem inside rectangle southeast}, then Corollary \ref{GM bijection southeast}, and then, finally, Lemma \ref{lemma:slope->edges}. These formulas were first proved by Loehr and Warrington by induction (see Chapter $5$ in \cite{LW}).

\section{Remarks on Geometry.}\label{Section: geometric remarks}

Statistics $\ctot_{\frac{q}{p}}(D),\ c^-_{\frac{q}{p}}(D),\ c^+_{\frac{q}{p}}(D),$ and $\midd_{\frac{q}{p}}(D)$ have nice geometric interpretations in terms of the toric action on the Hilbert schemes of points on the complex plane. The Hilbert scheme $\Hilb ^{n}(\mathbb C^2)$ is the space of ideals of codimension $n$ in the polynomial ring $\mathbb C[x,y].$ It inherits a natural action of the two-dimensional torus $(\mathbb C^*)^2,$ acting by scaling on the variables $x$ and $y.$ The fixed points are the monomial ideals, naturally parametrized by Young diagrams: given a Young diagram $D,$ the corresponding monomial ideal $I_D$ is spanned by the monomials $x^ky^l$ for $(k,l)\in \mathbb (Z_{\ge 0})^2\backslash D.$ The Hilbert polynomials of the tangent spaces at the fixed points were computed by Ellingsrud and Str\o mme in \cite{ES}:

$$
T_{I_D}\mbox{Hilb}^n(\mathbb{C}^2)=\sum_{c\in D}(t_1^{a(c)+1}t_2^{-l(c)}+t_1^{-a(c)}t_2^{l(c)+1}),
$$
where $D$ is a Young diagram and $I_D\subset\mathbb C[x,y]$ is the corresponding monomial ideal. Let $p$ and $q$ be coprime positive integers. Consider the one-dimensional subtorus $T_{p,q}:=\{t^p,t^q\}\subset (\mathbb C^*)^2.$ If $p+q>n,$ then the fixed points of the action of $T_{p,q}$ coincide with the fixed points of the action of the whole torus $(\mathbb C^*)^2$. Indeed, otherwise there should exist a monomial ideal $I_D\in\mbox{Hilb}^n(\mathbb C^2)$ such that at least one of the characters of the torus action on the tangent space is orthogonal to $T_{p,q}.$ In other words, according to the Ellingsrud-Str\o mme's formula, there should exist a Young diagram $D$ with $|D|=n,$ and a box $c\in D,$ such that either $(a(c)+1)p-l(c)q=0$ or $-a(c)p+(l(c)+1)q=0.$ Since $p$ and $q$ are relatively prime and positive, it follows then that $n=|D|\ge a(c)+l(c)+1\ge q+p.$ 

Note also that the orbits of the subgroup stay bounded as $t\rightarrow 0.$ It follows that one can consider the Bia\l ynicki-Birula cell decomposition of the Hilbert scheme by unstable cells (see \cite{BB}). To compute the dimension of the unstable cell $C_D$ one should count how many summands $t_1^kt_2^l$ in the Ellingsrud-Str\o mme's formula correspond to the repelling directions, which is equivalent to the inequality $(k,l)\cdot(p,q)=pk+ql>0.$ Note that from each pair of summands $t_1^{a(c)+1}t_2^{-l(c)}$ and $t_1^{-a(c)}t_2^{l(c)+1}$ at least one always satisfy this inequality. Indeed, 
$$
(a(c)+1,-l(c))\cdot (p,q)+(-a(c),l(c)+1)\cdot (p,q)=(1,1)\cdot (p,q)=p+q>0.
$$
Both summands satisfy the inequality if and only if one has
$$
\frac{a(c)}{l(c)+1}<\frac{q}{p}<\frac{a(c)+1}{l(c)}.
$$
Therefore, the dimension of the unstable cell $C_D$ is given by

$$
\dim C_D = |D|+h_{\frac{q}{p}}(D).
$$

If $p+q\le n,$ then the fixed points of the $T_{p,q}$-action are not isolated. Indeed, in this case it is not hard construct a Young diagram $D$ with $|D|=n,$ and a box $c\in D,$ such that $a(c)=q$ and $l(c)+1=p.$ The fixed point sets are called {\it quasihomogeneous Hilbert schemes} and denoted $\Hilb^n_{p,q}(\mathbb C^2).$ They are smooth and compact, but might be reducible and, moreover, irreducible components might have different dimensions. Similar to the above, one concludes that the dimension of the unstable subvariety of a fixed point $I_D$ is equal to $|D|+\midd_{\frac{q}{p}}(D).$

\begin{lemma}
The dimension of the irreducible component of the $\Hilb^n_{p,q}(\mathbb C^2)$ containing the monomial ideal $I_D$ is equal to $\ctot_{\frac{q}{p}}(D).$
\end{lemma}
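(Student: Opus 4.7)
The plan is to interpret both sides as dimensions of the tangent space to $\Hilb^n_{p,q}(\mathbb C^2)$ at the monomial ideal $I_D$, then invoke smoothness of the fixed-point scheme to conclude.

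First, I would note that $\Hilb^n_{p,q}(\mathbb C^2)$ is the fixed-point scheme of the action of the one-dimensional torus $T_{p,q}\subset (\mathbb C^*)^2$ on the smooth variety $\Hilb^n(\mathbb C^2)$, and so it is itself smooth. Moreover, the Zariski tangent space to the fixed-point scheme at a fixed point is the $T_{p,q}$-invariant subspace of the ambient tangent space:
$$
T_{I_D}\Hilb^n_{p,q}(\mathbb C^2) = \bigl(T_{I_D}\Hilb^n(\mathbb C^2)\bigr)^{T_{p,q}}.
$$
Consequently, the dimension of the irreducible component of $\Hilb^n_{p,q}(\mathbb C^2)$ containing $I_D$ equals $\dim \bigl(T_{I_D}\Hilb^n(\mathbb C^2)\bigr)^{T_{p,q}}$.

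Next, I would apply the Ellingsrud--Str\o mme formula recalled above. Under the action of $T_{p,q}=\{(t^p,t^q)\}$, the character $t_1^k t_2^l$ has weight $pk+ql$; hence a summand is $T_{p,q}$-invariant exactly when its weight vanishes. For each box $c\in D$, the Ellingsrud--Str\o mme formula contributes two characters:
\begin{itemize}
\item $t_1^{a(c)+1}t_2^{-l(c)}$, which has weight $p(a(c)+1)-ql(c)$ and is $T_{p,q}$-invariant if and only if $\dfrac{l(c)}{a(c)+1}=\dfrac{q}{p}$;
\item $t_1^{-a(c)}t_2^{l(c)+1}$, which has weight $-pa(c)+q(l(c)+1)$ and is $T_{p,q}$-invariant if and only if $\dfrac{l(c)+1}{a(c)}=\dfrac{q}{p}$.
\end{itemize}
Counting these invariant summands gives exactly $c^+_{\frac{q}{p}}(D)$ boxes contributing to the first type and $c^-_{\frac{q}{p}}(D)$ contributing to the second, so
$$
\dim \bigl(T_{I_D}\Hilb^n(\mathbb C^2)\bigr)^{T_{p,q}} = c^+_{\frac{q}{p}}(D)+c^-_{\frac{q}{p}}(D)=\ctot_{\frac{q}{p}}(D).
$$

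The main (and really only) conceptual ingredient is the smoothness of the fixed-point scheme, which I would cite as a standard fact (Iversen; or the general principle that $X^G$ is smooth for a reductive group $G$ acting on a smooth scheme $X$). Everything else is a direct character computation. The one place to be careful is that the two families of characters $t_1^{a(c)+1}t_2^{-l(c)}$ and $t_1^{-a(c)}t_2^{l(c)+1}$ do not overlap: indeed no box can contribute to both $c^+_{\frac{q}{p}}$ and $c^-_{\frac{q}{p}}$ simultaneously, since the two equalities on $(a(c),l(c))$ together force $a(c)+l(c)+1=0$, which is impossible. Hence the counts add without overcounting, and the Lemma follows.
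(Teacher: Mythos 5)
Your proposal is correct and is essentially the paper's own proof, which states in one line that the component's dimension equals the dimension of the $T_{p,q}$-invariant part of $T_{I_D}\Hilb^n(\mathbb C^2)$ and that the zero-weight summands of the Ellingsrud--Str\o mme character are counted by $\ctot_{\frac{q}{p}}(D)$; you have simply made the implicit ingredients (smoothness of the fixed-point scheme, the identification of its tangent space with the invariant subspace, and the non-overlap of the two character families) explicit. One small slip: with your stated weight $pk+ql$ for $t_1^kt_2^l$, the vanishing condition $p(a(c)+1)-ql(c)=0$ reads $\frac{l(c)}{a(c)+1}=\frac{p}{q}$ rather than $\frac{q}{p}$ (and similarly for the other summand), so to match the definitions of $c^{\pm}_{\frac{q}{p}}$ one should give $t_1$ weight $q$ and $t_2$ weight $p$ -- a purely notational $p\leftrightarrow q$ convention issue that the paper's Section 5 shares, and which does not affect the substance of the argument.
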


\begin{proof}
Indeed, the dimension of the subspace in the tangent space at $I_D$ fixed by the subtorus $T_{p,q}$ is equal to the number of summands $t_1^kt_2^l$ in the Ellingsrud-Str\o mme's formula, such that $(k,l)\cdot(p,q)=0,$ which is exactly $\ctot_{\frac{q}{p}}(D).$
\end{proof}

The factor torus $T^{p,q}:=(\mathbb C^2)/T_{p,q}$ acts on $\Hilb^n_{p,q}(\mathbb C^2)$ with isolated fixed points, which gives rise to two Bia\l ynicki-Birula cell decompositions of $\Hilb^n_{p,q}(\mathbb C^2)$: into stable and into unstable varieties of the fixed points (here one should choose a parametrization $\mathbb C^*\to T^{p,q}$ of the factor torus $T^{p,q}=(\mathbb C^*)^2/T_{p,q};$ we choose $t\mapsto [(t,1)]$). One immediately sees that $c^-_{\frac{q}{p}}(D)$ is the dimension of the stable variety of the fixed point $I_D\in \Hilb^n_{p,q}(\mathbb C^2),$ and $c^+_{\frac{q}{p}}(D)$ is the dimension of the unstable variety. 

Irreducible components of $\Hilb^n_{p,q}(\mathbb C^2)$ were studied by Evain in \cite{E}. 
He showed that two monomial ideals $I_D$ and $I_{D'}$ belong to the same connected component of $\mbox{Hilb}^n_{p,q}(\mathbb C^2)$ if and only if the Young diagrams $D$ and $D'$ have the same weighted content, i.e. for any integer $d$ one has
$$
\left|\{(x,y)\in D:px+qy=d\}\right|=\left|\{(x,y)\in D':px+qy=d\}\right|.
$$

One can reformulate Evain's results to show that two monomial ideals belong to the same irreducible component of $\Hilb^n_{p,q}(\mathbb C^2)$ if and only if the corresponding Young diagrams share the same graph $M(D):$

\begin{lemma}
Two Young diagrams $D$ and $D'$ have the same $(p,q)$-weighted content if and only if $M(D)=M(D').$
\end{lemma}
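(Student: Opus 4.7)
My plan is to introduce, for every Young diagram $D\subset R_{P,Q}$, the pair of multisets
\[
W_M(D):=\{qx+p\mu_x:0\le x\le P-1\}\qquad\text{and}\qquad N_M(D):=\{q\lambda_y+py:0\le y\le Q-1\},
\]
where $\mu_x$ denotes the length of the $x$-th column and $\lambda_y$ the length of the $y$-th row of $D$, and to show that both $M(D)$ and the weighted content are equivalent, as combinatorial data, to this pair. The first equivalence is direct: in $B(D)$ there is one westward step per column of $R_{P,Q}$, ending at the lattice point $(x,\mu_x)$ with label $Pq-qx-p\mu_x$, and one northward step per row, ending at $(\lambda_y,y+1)$ with label $Pq-q\lambda_y-p(y+1)$. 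Since a westward edge of $M(D)$ going from $v$ to $v+q$ is determined by its source $v$, and a northward edge from $v$ to $v-p$ is also determined by its source, the westward- and northward-edge multisets of $M(D)$ are canonically encoded by $W_M(D)$ and $N_M(D)$, respectively. Hence $M(D)=M(D')$ if and only if $W_M(D)=W_M(D')$ and $N_M(D)=N_M(D')$.

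The heart of the argument is the following sublemma: for any finite sequence $b=(b_0,\ldots,b_{K-1})$ of non-negative integers, the multisets
\[
S(b):=\bigsqcup_{j=0}^{K-1}\bigl[qj,\,qj+b_j\bigr)\qquad\text{and}\qquad T(b):=\{qj+b_j:0\le j\le K-1\}
\]
determine one another. I would deduce this from the identity
\[
\#\{j:qj\le n<qj+b_j\}=\#\{j:qj\le n\}-\#\{j:qj+b_j\le n\},
\]
whose left-hand side is the multiplicity of $n$ in $S(b)$, whose first right-hand term depends only on $n$, $q$, and $K$, and whose second right-hand term is the cumulative count of $T(b)$. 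Hence $S(b)$ and $T(b)$ encode equivalent data.

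Finally I would apply the sublemma one residue class at a time. For $r\in\{0,\ldots,p-1\}$ let $x_0\in\{0,\ldots,p-1\}$ be the unique element with $qx_0\equiv r\pmod p$; then the boxes of $D$ with weighted content $\equiv r\pmod p$ lie in the columns $x_0,x_0+p,\ldots,x_0+(K-1)p$, and setting $b_j:=\mu_{x_0+jp}$ the residue-$r$ part of the weighted content of $D$ is, after a uniform affine change of coordinates, exactly $S(b)$, while the residue-$r$ part of $W_M(D)$ is $T(b)$. The sublemma then shows that the weighted content and $W_M(D)$ determine each other; the symmetric argument with $p$ and $q$ swapped (splitting by residue modulo $q$ and grouping rows) does the same for $N_M(D)$. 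Combining with the first paragraph proves the lemma. The main obstacle, resolved cleanly by the sublemma, is that $W_M(D)$ does not recover the individual column heights $\mu_x$: distinct non-increasing sequences can produce the same $T(b)$, but the sublemma shows the resulting ambiguity is precisely matched on the content side.
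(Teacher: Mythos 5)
Your proof is correct, and it reaches the paper's conclusion by what is at bottom the same mechanism, but in a genuinely more elementary package. The paper encodes the edge data of $M(D)$ and the weighted content as Laurent polynomials $P_N^{M(D)}$, $P_W^{M(D)}$, $C_{p,q}^D$, observes that each northward edge of $M(D)$ contributes the geometric series of contents of the half-row of boxes to its west, derives $C_{p,q}^D=\frac{t^{Kpq-p-q}}{1-t^q}P_N^{M(D)}(t^{-1})-t^{-q}\frac{1-t^{Kpq}}{(1-t^p)(1-t^q)}$, and then simply asserts that this relation is invertible. Your argument is the coefficient-wise form of the same idea: restricting to a single residue class of the content turns multiplication by $\frac{1}{1-t^q}$ into passage to cumulative counts, and your sublemma identifying $S(b)$ with $T(b)$ is precisely an explicit inversion of that operation --- so the invertibility that the paper leaves implicit is actually proved, by a two-line counting identity. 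Two smaller differences: you group boxes into the finite columns attached to westward edges rather than the westward-infinite half-rows attached to northward edges, which spares you the subtraction of the auxiliary half-strip; and you run the equivalence through both $W_M(D)$ and $N_M(D)$, whereas the paper notes that either one of $P_N^{M(D)}$, $P_W^{M(D)}$ already suffices (you in fact only need one of your two multisets as well, since each alone determines the content and hence the other). One cosmetic point to settle in the write-up: your formulas take the weighted content of $(x,y)$ to be $qx+py$, which is what matches the vertex labels $Pq-qx-py$ and what the paper's own computation actually uses, while the paper's displayed definition of $C^D_{p,q}$ reads $px+qy$; the argument is symmetric under this swap, but you should fix a convention explicitly so that the residue-mod-$p$ classes really do correspond to columns as claimed.
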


\begin{proof}
The proof is a manipulation with generating series. Let us use the following notations:
$$
P_N^{M(D)}:=\sum\limits_{v\in V_{M(D)}} |N_{in}(v)|t^v,
$$
$$
P_W^{M(D)}:=\sum\limits_{v\in V_{M(D)}} |W_{out}(v)|t^v,
$$
and
$$
C_{p,q}^D:=\sum\limits_{c\in D} t^{px+qy}.
$$
Note that knowing the polynomials $P_N^{M(D)}$ and $P_W^{M(D)}$ is enough to recover the graph $M(D)$ (in fact, it is enough to know just one of these polynomials, as we will see below). On the other side, knowing $C_{p,q}^D$ is equivalent to knowing the $(p,q)$-weighted content of $D.$ We will deduce explicit formulas for $C_{p,q}^D$ in terms of $P_N^{M(D)},$ and in terms of $P_W^{M(D)},$ which will be evidently invertible. This will be enough to complete the proof. 

Let $e\in N_{in}(v)$ be a northward edge of $M(D).$ It corresponds to a northward edge on the boundary path $B(D).$ It follows that the weighted content of the box immediately to the west from the edge $e$ is equal to $Kpq-p-q-v.$ Therefore, the generating series for the contents of all boxes in the half-row to the west of the edge $e$ equals to
$$
t^{Kpq-p-q-v}(1+t^q+t^{2q}+\ldots)=\frac{t^{Kpq-p-q-v}}{1-t^q}.
$$ 
To get $C_{p,q}^D,$ one should sum up the above formula over all northward edges and subtract the generating series of the weighted contents of all boxes in the half strip $\{(x,y)\in\mathbb Z^2:x<0\ 0\le y<q\},$ which can be computed as follows:
$$
(t^{-q}+t^{-q+p}+\ldots+t^{-q+(Kq-1)p})(1+t^q+t^{2q}+\ldots)=t^{-q}\frac{1-t^{Kpq}}{(1-t^p)(1-t^q)}.
$$
Therefore, one gets
$$
C_{p,q}^D=\sum\limits_{v\in V_{M(D)}} |N_{in}(v)|\frac{t^{Kpq-p-q-v}}{1-t^q}-t^{-q}\frac{1-t^{Kpq}}{(1-t^p)(1-t^q)}
$$ 
$$
=\frac{t^{Kpq-p-q}}{1-t^q}P_N^{M(D)}(t^{-1})-t^{-q}\frac{1-t^{Kpq}}{(1-t^p)(1-t^q)}.
$$
Similarly, one can deduce the following formula in terms of $P_W^{M(D)}:$
$$
C_{p,q}^D=\frac{t^{Kpq-p-q}}{1-t^p}P_W^{M(D)}(t^{-1})-t^{-p}\frac{1-t^{Kpq}}{(1-t^p)(1-t^q)}.
$$
Note that both formulas are invertible.
\end{proof}

The above consideration provides a geometric explanation of the fact that $\ctot_{\frac{q}{p}}(D)$ and $\midd_{\frac{q}{p}}(D)$ depend only on the graph $M(D)$ and not on the Eulerian tour $E(D).$  Moreover, existence of the Loehr-Warrington's bijections interchanging statistics $c^-_{\frac{q}{p}}$ and $c^+_{\frac{q}{p}}$ while preserving the multigraph $M(D),$ follows from the fact that the stable and the unstable cell decompositions of an irreducible component of $\Hilb_{p,q}^n(\mathbb C^2)$ have the same number of cells of a given dimension. However, geometric meaning of a particular bijection constructed in \cite{LW} remains mysterious.

\end{document}